\theoremstyle{plain}
\newtheorem{theorem}{Theorem}[section]
\newtheorem{proposition}[theorem]{Proposition}
\newtheorem{lemma}[theorem]{Lemma}
\newtheorem{corollary}[theorem]{Corollary}
\theoremstyle{definition}
\theoremstyle{remark}
\title{Efficient Data-Driven Leverage Score Sampling Algorithm for the Minimum Volume Covering Ellipsoid Problem in Big Data}
\author{
\textbf{Elizabeth Harris}$^{1}$ \quad 
\textbf{Ali Eshragh}$^{2,3}$ \quad
\textbf{Bishnu Lamichhane}$^{1}$ \\
\textbf{Jordan Shaw-Carmody}$^{1}$ \quad
\textbf{Elizabeth Stojanovski}$^{1}$\\
$^1$School of Information and Physical Sciences, University of Newcastle, NSW, Australia \\
$^2$Carey Business School, Johns Hopkins University, MD, USA \\
$^3$International Computer Science Institute, Berkeley, CA, USA
}
\begin{document}

\maketitle

\begin{abstract}
    The Minimum Volume Covering Ellipsoid (MVCE) problem, characterised by $n$ observations in $d$ dimensions where $n \gg d$, can be computationally very expensive in the big data regime. We apply methods from randomised numerical linear algebra to develop a data-driven leverage score sampling algorithm for solving MVCE, and establish theoretical error bounds and a convergence guarantee. Assuming the leverage scores follow a power law decay, we show that the computational complexity of computing the approximation for MVCE is reduced from $\mathcal{O}(nd^2)$ to $\mathcal{O}(nd + \text{poly}(d))$, which is a significant improvement in big data problems. Numerical experiments demonstrate the efficacy of our new algorithm, showing that it substantially reduces computation time and yields near-optimal solutions.
\end{abstract}
\section{Introduction}
\label{sec:intro}
The Minimum Volume Covering Ellipsoid (MVCE) problem arises in many applied and theoretical areas. Statistical applications include outlier detection \cite{titterington1978estimation}, clustering \cite{rosen1965pattern}, and the closely related D-optimal design problem \cite{silvey1980optimal}. In fact, the MVCE problem and the D-optimal design problem are dual to one another \cite{sibson1972discussion,titterington1975optimal}. Containing ellipsoids are used in parameter identification and control theory to describe uncertainty sets for parameters and state vectors \cite{chernousko2005ellipsoidal,schweppe1968recursive}. Minimum volume covering ellipsoids are also used in computational geometry and computer graphics \cite{eberly20063d}, in particular, for collision detection \cite{chen2016virtual}.

Many algorithms for computing MVCEs and D-optimal designs have been studied in the literature. These include Frank-Wolfe type algorithms \cite{frank1956algorithm, wolfe1970convergence, atwood1973sequences, khachiyan1996rounding, kumar2005minimum}, interior point algorithms \cite{khachiyan1993complexity, nesterov1994interior}, the Dual Reduced Newton algorithm \cite{sun2004computation}, the Cocktail algorithm \cite{yu2011d}, the Randomised Exchange algorithm \cite{harman2020randomized}, and the Fixed Point algorithm \cite{cohen2019near,woodruff2023new}. However, when these algorithms are applied to very large datasets, they may be computationally inefficient, and may exceed storage limitations \cite{harman2020randomized}. One solution is to combine such solution algorithms with active set or batching strategies \cite{sun2004computation, kallberg2019active, kudela2019minimum, rosa2022computing}. The main idea of this approach is to iteratively apply the solution algorithm to a smaller subset of points until convergence to the solution. The motivation for this approach is that an ellipsoid is only determined by at most $d(d+3)/2$ points on its boundary \cite{john1948extremum}.

Instead of applying an active set strategy, we apply deterministic sampling to reduce the number of points considered by the algorithm. This deterministic sampling method selects points corresponding to the highest statistical leverage scores. The resulting compressed dataset approximately maintains many of the qualities of the original dataset, provided that the number of samples is sufficiently large \cite{papailiopoulos2014provable, mccurdy2019deterministic}. However, there is no immediate guarantee of the quality of solution to the MVCE problem for this compressed dataset. In this paper, we provide the first theoretical guarantees on the quality of initial and final solution for the MVCE problem for this deterministic sampling method. In numerical experiments, we apply the MVCE algorithm to just our sampled data. Our experiments show that this sampling method greatly improves computation time, and provides near-optimal final solutions.

For the remainder of Section \ref{sec:intro}, we outline the contributions and limitations of our approach. In Sections \ref{sec:MVCEproblem} and \ref{sec:lev}, we formally introduce the MVCE problem, and leverage score based sampling, respectively. Our approach is presented in Section \ref{sec:Approach}, and our theoretical results in Sections \ref{sec:TheoreticalResults} and \ref{sec:TheoreticalResults2}. In Section \ref{sec:empirical_results}, we show our numerical results. We present our conclusions in Section \ref{sec:Conclusion}. Some proofs are provided in Appendices \ref{app:altproof}, \ref{app:dalss}, \ref{app:initoptgap}, \ref{app:finaloptgap}, and additional numerical results are presented in Appendix \ref{app:numresults}.
\subsection{Contributions} \label{sec:contributions}
In this paper, we present a new simplified proof of Theorem 1 in \cite{mccurdy2019deterministic}. Using this theorem, we provide the first theoretical guarantees on the quality of initial and final solution for the MVCE problem when using deterministic leverage score sampling. Further, we show that these guarantees still hold (with high probability) when approximate leverage scores are used. 

Assuming a power law decay on the leverage scores, we show that our method improves the theoretical computation time required to approximate the MVCE. We also demonstrate the efficiency of our approach on synthetic and real world datasets.
\subsection{Limitations} \label{sec:limitations}
We require the leverage scores to follow a power law decay, so that our sample size $s$ is guaranteed to be much smaller than $n$. However, in our numerical experiments, we demonstrate that this requirement can usually be relaxed in practice. In future work, we aim to remove the power law decay requirement.

We also assume $n \gg d$. This is because our base algorithm, the Wolfe-Atwood (WA) algorithm \cite{wolfe1970convergence,atwood1973sequences}, performs best under this assumption. However, if this condition is violated, we can use a different underlying algorithm which performs well when $n$ and $d$ are of similar order (e.g., the Fixed Point algorithm \cite{cohen2019near,woodruff2023new}).

Finally, we make no assumption on the sparsity of our data matrix $\bm{X}$. Although this gives wider applicability, it means we do not take advantage of any sparse input. Again, this can be resolved by using a different underlying algorithm (e.g., the Fixed Point algorithm \cite{cohen2019near,woodruff2023new}).
\paragraph{Notation.}
Throughout this paper, vectors and matrices are denoted by bold lowercase and bold uppercase letters, respectively (e.g. $\bm{a}$ and $\bm{A}$). The $i$th entry of $\bm{a}$ is denoted $a_i$, and the $(i,j)$th entry of $\bm{A}$ is denoted $\bm{A}_{i\,j}$. Let $\bm{A}$ and $\bm{B}$ be symmetric positive definite matrices, then $\bm{A} \succeq \bm{B}$ if $\bm{A} - \bm{B}$ is symmetric positive semidefinite; and $\bm{A} \succ \bm{B}$ if $\bm{A} - \bm{B}$ is symmetric positive definite. The determinant of a matrix $\bm{A}$ is denoted $\det \left( \bm{A} \right)$. Unless otherwise specified, $\bm{A} = \mathtt{diag}\left( \bm{a} \right)$. We use regular lowercase to denote scalar constants (e.g. $c$). Finally, $\bm{e}$ denotes a vector of ones, and $\bm{e}_i$ denotes the vector with one at position $i$, and zero otherwise.
\section{The minimum volume covering ellipsoid problem}
\label{sec:MVCEproblem}
Let $\mathcal{X} = \left\lbrace \bm{x}_1,\dots,\bm{x}_n \right\rbrace$ be a set of data points in $\text{\rm{I\!R}}^d$. Then the minimum volume covering ellipsoid is the ellipsoid that covers $\mathcal{X}$, which attains the minimum volume of all covering ellipsoids of $\mathcal{X}$. We assume throughout that there exists a non-degenerate minimum volume covering ellipsoid.

We define the ellipsoid $\mathcal{E}\left(\bm{Q},\bm{x}_c\right)$ as
\begin{align*}
    \mathcal{E} \left(\bm{Q},\bm{x}_c\right) := \{\bm{x} \in \text{\rm{I\!R}}^d \, : \, \left(\bm{x} - \bm{x}_c\right)^\intercal \bm{Q} \left(\bm{x} - \bm{x}_c\right) \leq d \},
\end{align*}
where $\bm{x}_c$ is the centre of the ellipsoid, and $\bm{Q}$ is an $d$-dimensional symmetric positive definite matrix. Its volume is given by
\begin{align*}
    \text{vol} \left(\mathcal{E}\left(\bm{Q},\bm{x}_c\right)\right) = d^{d/2} \Omega_d \det\left(\bm{Q}\right)^{-1/2},
\end{align*}
where $\Omega_d$ is the volume of the unit ball in $\text{\rm{I\!R}}^d$ (e.g., see \cite{todd2016book}). 

We can now write the mathematical formulation of the Minimum Volume Covering Ellipsoid problem. Suppose we have a finite set of points $\mathcal{X} = \left\lbrace \bm{x}_1, \dots, \bm{x}_n \right\rbrace \subset \text{\rm{I\!R}}^d$. Then its minimum volume covering ellipsoid can be found by solving
\begin{align}
    \underset{\bm{Q} \succ \bm{0},\, \bm{x}_c \in \text{\rm{I\!R}}^d}{\text{minimise}} \quad & -\log \det\left(\bm{Q}\right) \qquad \tag{\textsc{P}$_0$} \label{Primal1}\\
    \text{subject to} \quad & \left(\bm{x}_i - \bm{x}_c\right)^\intercal \bm{Q} \left(\bm{x}_i - \bm{x}_c\right) \leq d, \quad i = 1,\dots,n. \nonumber
\end{align}
Although the objective function is convex, \eqref{Primal1} itself is not convex \cite{todd2016book}. 

We therefore reformulate \eqref{Primal1} so that it is convex. At the cost of working in $\text{\rm{I\!R}}^{d+1}$, we can calculate the centred minimum volume covering ellipsoid, and recover the solution to \eqref{Primal1} \cite{titterington1975optimal}. Therefore, we can set $\bm{x}_c = \bm{0}$, and obtain
\begin{align}
    \underset{\bm{Q} \succ \bm{0}}{\text{minimise}} \quad & f(\bm{Q}) := -\log \det\left(\bm{Q}\right) \qquad \tag{\textsc{P}} \label{Primal} \\
    \text{subject to} \quad & \bm{x}_i^\intercal \bm{Q} \bm{x}_i \leq d, \quad i = 1,\dots,n. \nonumber
\end{align}
 We let $\bm{Q}^*$ denote an optimal solution to Problem \eqref{Primal}. Then MVCE$\left(\mathcal{X} \right) := \mathcal{E}\left(\bm{Q}^*, \bm{0} \right)$ is the minimum volume covering ellipsoid. We will refer to Problem \eqref{Primal} as the MVCE problem.

 The dual problem to Problem \eqref{Primal} is the D-optimal design problem. This problem is concave, and can be formulated as
 \begin{align}
    \underset{\bm{u} \in \rm{I\!R}^n}{\text{maximise}} \quad & g(\bm{u}) := \log \det\left( \sum_{i=1}^n u_i \bm{x}_i \bm{x}_i^\intercal \right) \tag{\textsc{D}} \label{Dual} \\
     \text{subject to} \quad & \sum_{i=1}^n u_i = 1, \quad \bm{u} \geq \bm{0}, \nonumber
 \end{align}
where $\bm{u}$ is called the design vector. We note that for every design vector $\bm{u}$, we can find its associated shape matrix
\begin{align*}
    \bm{Q}\left(\bm{u}\right) := \left( \sum_{i=1}^n u_i \bm{x}_i \bm{x}_i^\intercal \right)^{-1},
\end{align*}
provided that the inverse exists. Hence if we have an optimal solution $\bm{u}^*$ to Problem \eqref{Dual}, then the optimal solution to Problem \eqref{Primal} is $\bm{Q}\left(\bm{u}^*\right)$.
 
We note that Problem \eqref{Dual} (and Problem \eqref{Primal}) cannot usually be solved exactly, so we will focus on deriving approximate solutions. To ensure the chosen algorithm terminates with a guaranteed quality of solution, we will define some approximate optimality conditions. A feasible $\bm{u}$ for Problem \eqref{Dual} is called $\delta$-primal feasible if $\bm{Q}\left(\bm{u}\right)$ satisfies
\begin{align*}
    \bm{x}_i^\intercal \bm{Q}\left(\bm{u}\right) \bm{x}_i \leq \left(1+\delta\right)d,
\end{align*}
for all $i = 1,\dots,n$. If $\bm{Q}\left(\bm{u}\right)$ additionally satisfies
\begin{align*}
    \bm{x}_i^\intercal \bm{Q}\left(\bm{u}\right) \bm{x}_i \geq \left(1-\delta\right)d \text{ if } u_i > 0,
\end{align*}
for all $i = 1,\dots,n$, then we say that $\bm{u}$ is $\delta$-approximately optimal. These optimality conditions ensure the optimality gap is small.
\begin{proposition}[\cite{todd2016book}, Proposition 2.9] \label{prop:optimality gap}
    If we have a $\delta$-primal feasible (or $\delta$-approximately optimal) solution $\bm{u}$, then $\bm{u}$ and $(1+\delta)^{-1} \bm{Q}\left( \bm{u} \right)$ are both within $d \log \left( 1 + \delta \right)$ of being optimal in \eqref{Dual} and \eqref{Primal}, respectively.
\end{proposition}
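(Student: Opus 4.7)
The plan is to construct a primal-feasible rescaling of $\bm{Q}(\bm{u})$ from the $\delta$-primal feasible dual vector, and then sandwich the primal and dual objective values at this related pair using duality. Since $\bm{u}$ is $\delta$-primal feasible, $\bm{x}_i^\intercal \bm{Q}(\bm{u}) \bm{x}_i \leq (1+\delta) d$ for all $i$, so dividing by $1+\delta$ shows that $\tilde{\bm{Q}} := (1+\delta)^{-1} \bm{Q}(\bm{u})$ satisfies $\bm{x}_i^\intercal \tilde{\bm{Q}} \bm{x}_i \leq d$, hence is feasible for \eqref{Primal}.

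A short calculation using $\det(c \bm{A}) = c^d \det(\bm{A})$ for $d \times d$ matrices together with $\bm{Q}(\bm{u}) = \bigl(\sum_i u_i \bm{x}_i \bm{x}_i^\intercal\bigr)^{-1}$ yields
\[
f(\tilde{\bm{Q}}) = -\log\det(\tilde{\bm{Q}}) = d\log(1+\delta) - \log\det(\bm{Q}(\bm{u})) = d\log(1+\delta) + g(\bm{u}).
\]
Next, I would invoke strong duality between \eqref{Primal} and \eqref{Dual}, so that the primal optimum $p^*$ equals the dual optimum $d^*$. Combined with $g(\bm{u}) \leq d^*$ (since $\bm{u}$ is dual feasible) and $p^* \leq f(\tilde{\bm{Q}})$ (since $\tilde{\bm{Q}}$ is primal feasible), this gives the chain
\[
g(\bm{u}) \leq d^* = p^* \leq f(\tilde{\bm{Q}}) = g(\bm{u}) + d\log(1+\delta),
\]
which rearranges to $p^* - g(\bm{u}) \leq d\log(1+\delta)$ and $f(\tilde{\bm{Q}}) - p^* \leq d\log(1+\delta)$, exactly the two optimality-gap claims of the proposition. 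The $\delta$-approximately optimal case follows immediately, since it is a strictly stronger condition than $\delta$-primal feasibility.

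The main obstacle is justifying strong duality for the \eqref{Primal}--\eqref{Dual} pair; everything else in the argument is either a one-line determinant identity or weak duality. Strong duality can be handled either by working through the Lagrangian formulation as in \cite{todd2016book}, or by verifying a Slater-type constraint qualification using the standing assumption that a non-degenerate minimum volume covering ellipsoid exists. Since these are essentially standard facts about the MVCE/D-optimal design pair, I would state strong duality explicitly with a citation rather than reprove it in the body of this proof.
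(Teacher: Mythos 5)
Your proof is correct, and it is essentially the standard argument: the paper itself does not prove this proposition but imports it from Todd's book, where the proof proceeds exactly as you do --- rescale $\bm{Q}(\bm{u})$ by $(1+\delta)^{-1}$ to obtain primal feasibility, compute $f(\tilde{\bm{Q}}) = g(\bm{u}) + d\log(1+\delta)$ via the determinant scaling identity, and close the gap with duality. The one refinement worth making: the ``main obstacle'' you identify is not actually an obstacle, because strong duality is never needed. Weak duality alone --- $g(\bm{v}) \leq f(\bm{Q})$ for every dual-feasible $\bm{v}$ and primal-feasible $\bm{Q}$ --- already yields both conclusions, since it gives $g^* \leq f(\tilde{\bm{Q}})$ and $g(\bm{u}) \leq f^*$, whence
\begin{align*}
g^* - g(\bm{u}) \;\leq\; f(\tilde{\bm{Q}}) - g(\bm{u}) \;=\; d\log(1+\delta)
\qquad\text{and}\qquad
f(\tilde{\bm{Q}}) - f^* \;\leq\; f(\tilde{\bm{Q}}) - g(\bm{u}) \;=\; d\log(1+\delta).
\end{align*}
Weak duality itself is a two-line computation: for feasible $\bm{v}$ and $\bm{Q}$, $\mathrm{tr}\bigl(\bm{Q}^{1/2} \bigl(\sum_i v_i \bm{x}_i \bm{x}_i^\intercal\bigr) \bm{Q}^{1/2}\bigr) = \sum_i v_i\, \bm{x}_i^\intercal \bm{Q} \bm{x}_i \leq d$, so by the AM--GM inequality on eigenvalues $\det\bigl(\bm{Q}^{1/2} \bigl(\sum_i v_i \bm{x}_i \bm{x}_i^\intercal\bigr) \bm{Q}^{1/2}\bigr) \leq 1$, i.e.\ $g(\bm{v}) \leq -\log\det(\bm{Q}) = f(\bm{Q})$. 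Stating this in place of an appeal to strong duality (or a Slater condition) makes the proof fully self-contained and slightly more elementary; otherwise your argument is complete.
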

\section{Sampling using leverage scores}
\label{sec:lev}
The concept of statistical leverage scores has long been used in statistical regression diagnostics to identify outliers \cite{rousseeuw2011robust}. Given a data matrix $\bm{X} \in \text{\rm{I\!R}}^{n \times d}$ with $n > d$, consider any orthogonal matrix $\bm{A}$ such that $\text{Range}(\bm{A}) = \text{Range}(\bm{X})$. The $i$th leverage score corresponding to the $i$th row of $\bm{X}$ is defined as
\begin{align*}
    \ell_i \left( \bm{X} \right) := \left\Vert \bm{A}(i,:)\right\Vert^2. 
\end{align*}
It can be easily shown that this is well defined in that the leverage score does not depend on the particular choice of the basis matrix $\bm{A}$. Furthermore, the $i$th leverage score is the $i$th diagonal entry of the hat matrix, that is,
\begin{align*}
    \ell_i \left( \bm{X} \right) = \bm{e}_i^\intercal \bm{H} \bm{e}_i, \qquad i = 1,\dots,n, 
\end{align*}
where
\begin{align*}
    \bm{H} := \bm{X} (\bm{X}^\intercal \bm{X})^{-1} \bm{X}^\intercal.
\end{align*}
The hat matrix is symmetric and idempotent. We can use these properties to easily show that
\begin{align*}
    0 \leq \ell_i \left( \bm{X} \right) \leq 1,
\end{align*}
for all $i$, and
\begin{align*}
    \sum_{i=1}^n \ell_i \left( \bm{X} \right) = \text{rank}\left( \bm{X} \right).
\end{align*}
We only consider full rank matrices, $\bm{X}$, that is, with $\text{rank}\left( \bm{X} \right) = d$.
Thus
\begin{align*}
    \sum_{i=1}^n \ell_i \left( \bm{X} \right) = d.
\end{align*}
\subsection{Deterministic sampling} \label{subsec:detsamp}
When sampling deterministically, we sample the $s$ rows from $\bm{X}$ with highest leverage scores. Without loss of generality, assume that the leverage scores are ordered $\ell_1 \left( \bm{X} \right) \geq \dots \geq \ell_n \left( \bm{X} \right)$. We summarise the sampling procedure in Algorithm \ref{alg:DLSS_thresh}.

\begin{algorithm}
    \begin{algorithmic}[1]
        \REQUIRE $\bm{X} = \begin{bmatrix} \bm{x}_1, \dots, \bm{x}_n \end{bmatrix}^\intercal \in \text{\rm{I\!R}}^{n \times d},  \varepsilon \in (0,1)$
        \STATE Calculate leverage scores for each row in $\bm{X}$. Without loss of generality, let $\ell_1 \left( \bm{X} \right) \geq \dots \geq \ell_n \left( \bm{X} \right)$.
        \STATE Let $s = \arg \min_j \left( \sum_{i=1}^j \ell_i \left( \bm{X} \right) > d - \varepsilon \right)$.
        \STATE Let $\bm{R} = \bm{0} \in \text{\rm{I\!R}}^{s \times n}$.
        \FOR{$i = 1:s$}
            \STATE Set row $i$ of $\bm{R}$ equal to $\bm{e}_i$.
        \ENDFOR
        \ENSURE $\bm{R}, s$
        \caption{Deterministic Leverage Score Sampling with Threshold \cite{papailiopoulos2014provable}}
        \label{alg:DLSS_thresh}
    \end{algorithmic}
\end{algorithm}

We note that $s$ is carefully chosen, to ensure the following subspace embedding result.
\begin{theorem}[\cite{mccurdy2019deterministic}, Theorem 1] \label{thrm:thresh}
    Let $\varepsilon \in (0,1)$. Use Algorithm \ref{alg:DLSS_thresh} to construct the sampling matrix $\bm{R}$. Then
    \begin{align*}
        (1-\varepsilon) \bm{X}^\intercal \bm{X} \prec \left(\bm{R} \bm{X}\right)^\intercal \bm{R} \bm{X} \preceq \bm{X}^\intercal \bm{X}.
    \end{align*}
\end{theorem}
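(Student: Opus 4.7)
The plan is to reduce the Loewner-order inequality to a scalar statement by testing against an arbitrary nonzero $\bm{y} \in \text{\rm{I\!R}}^d$, and then to exploit the fact that the discarded rows carry only a small share of the total leverage mass.

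First I would rewrite the two matrices as sums of rank-one terms, $(\bm{R}\bm{X})^\intercal \bm{R}\bm{X} = \sum_{i=1}^s \bm{x}_i \bm{x}_i^\intercal$ and $\bm{X}^\intercal \bm{X} = \sum_{i=1}^n \bm{x}_i \bm{x}_i^\intercal$. The upper bound follows immediately, since $\bm{X}^\intercal \bm{X} - (\bm{R}\bm{X})^\intercal \bm{R}\bm{X} = \sum_{i=s+1}^n \bm{x}_i \bm{x}_i^\intercal \succeq \bm{0}$.

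For the strict lower bound, fix $\bm{y} \neq \bm{0}$. The claim $(1-\varepsilon)\bm{X}^\intercal \bm{X} \prec (\bm{R}\bm{X})^\intercal \bm{R}\bm{X}$ is equivalent to
$$\sum_{i=s+1}^n (\bm{x}_i^\intercal \bm{y})^2 \;<\; \varepsilon\, \bm{y}^\intercal \bm{X}^\intercal \bm{X}\, \bm{y}.$$
The decisive step is a pointwise leverage score bound. Using the thin SVD $\bm{X} = \bm{U}\bm{\Sigma}\bm{V}^\intercal$, one has $\bm{x}_i^\intercal \bm{y} = \bm{u}_i^\intercal \bm{\Sigma} \bm{V}^\intercal \bm{y}$, where $\bm{u}_i$ is the $i$th row of $\bm{U}$, and Cauchy--Schwarz yields
$$(\bm{x}_i^\intercal \bm{y})^2 \;\leq\; \|\bm{u}_i\|^2 \, \|\bm{\Sigma}\bm{V}^\intercal \bm{y}\|^2 \;=\; \ell_i(\bm{X})\, \bm{y}^\intercal \bm{X}^\intercal \bm{X}\, \bm{y}.$$
Summing over $i > s$ and using the threshold property defining $s$, namely $\sum_{i=1}^s \ell_i(\bm{X}) > d - \varepsilon$, which gives $\sum_{i=s+1}^n \ell_i(\bm{X}) < \varepsilon$, produces the required bound. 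Strictness is preserved because $\bm{X}$ has full column rank, so $\bm{y}^\intercal \bm{X}^\intercal \bm{X}\, \bm{y} > 0$ whenever $\bm{y} \neq \bm{0}$.

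I would flag the pointwise bound $(\bm{x}_i^\intercal \bm{y})^2 \le \ell_i(\bm{X})\, \bm{y}^\intercal \bm{X}^\intercal \bm{X}\, \bm{y}$ as the only non-mechanical ingredient: it is precisely the bridge between the selection rule of Algorithm \ref{alg:DLSS_thresh} (take the largest leverage scores) and a spectral approximation guarantee. Once this lemma is in place, the rest is bookkeeping together with a direct application of the stopping rule for $s$, making this substantially shorter than the original argument in \cite{mccurdy2019deterministic}.
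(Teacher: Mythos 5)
Your proof is correct and follows essentially the same route as the paper's (Appendix \ref{app:altproof}): both reduce the lower bound to the pointwise inequality $\bm{x}_i \bm{x}_i^\intercal \preceq \ell_i(\bm{X})\, \bm{X}^\intercal \bm{X}$ summed over the discarded rows, combined with the stopping rule $\sum_{i=s+1}^n \ell_i(\bm{X}) < \varepsilon$, and both get the upper bound from $\sum_{i=s+1}^n \bm{x}_i \bm{x}_i^\intercal \succeq \bm{0}$. The only difference is that the paper cites the key pointwise bound from Lemma 4 of \cite{cohen2015uniform}, whereas you derive it in scalarised form via the thin SVD and Cauchy--Schwarz; that makes your write-up self-contained but does not change the argument.
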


Algorithm \ref{alg:DLSS_thresh} has time complexity $\mathcal{O} \left(nd^2\right)$, due to the cost of calculating the leverage scores exactly. This can be improved by using approximate leverage score algorithms. Drineas et al. \cite{drineas2012fast} developed a fast sampled randomised Hadamard transform (SRHT) with time complexity $\mathcal{O} \left(nd \log n \right)$. More recently, Eshragh et al. \cite{eshragh2023sequential} developed the Sequential Approximate Leverage-Score Algorithm (SALSA) with time complexity $\mathcal{O} \left( nd \right)$. Moreover, we can use these approximate leverage score algorithms to prove an analogous result to Theorem \ref{thrm:thresh} (see Appendix \ref{app:dalss}).
\section{Our approach}
\label{sec:Approach}
We are interested in calculating MVCE$\left(\mathcal{X} \right)$, where $\mathcal{X} = \left\lbrace \bm{x}_1, \dots, \bm{x}_n \right\rbrace \subset \text{\rm{I\!R}}^d$, $n \gg d$. For very large $n$ (and $d$), this can be very computationally expensive. Instead of calculating MVCE$\left(\mathcal{X} \right)$ directly, we will calculate the MVCE on a much smaller subset of $\mathcal{X}$. We select our subset by using leverage score based sampling, as introduced in Section \ref{sec:lev}. Let this subset be $\mathcal{X}_s$, where $\mathcal{X}_s$ contains up to $s$ points sampled from $\mathcal{X}$. For ease of calculation, let the points from $\mathcal{X}$ be stored in the rows of the data matrix $\bm{X} \in \text{\rm{I\!R}}^{n \times d}$. Then we sample $s$ rows from $\bm{X}$, which we store in the rows of $\bm{X}_s \in \text{\rm{I\!R}}^{s \times d}$. We then use $\bm{X}_s$ instead of $\bm{X}$ in our solution algorithm. That is, our solution algorithm solves the concave problem
\begin{align}
    \underset{\bm{u} \in \rm{I\!R}^s}{\text{maximise}} \quad & g_s(\bm{u}) := \log \det\left( \bm{X}_s^\intercal \bm{U} \bm{X}_s \right) \tag{\textsc{D}$_s$} \label{Duals} \\
     \text{subject to} \quad & \sum_{i=1}^s u_i = 1, \quad \bm{u} \geq \bm{0}. \nonumber
\end{align}
\subsection{Computational complexity}
\label{sec:Computational_Complexity}
We compare the computational complexity of our algorithm to the current state of the art algorithm, the Wolfe-Atwood (WA) algorithm \cite{wolfe1970convergence,atwood1973sequences}, which computes a $\delta$-approximately optimal solution to \eqref{Dual}. It uses the Kumar-Yildirim (KY) initialisation \cite{kumar2005minimum}, which places equal weight on a small subset of points. These algorithms have time complexity $\mathcal{O} \left(n d^2 \left( \log\log d + \delta^{-1} \right) \right)$ \cite{todd2007khachiyan} and $\mathcal{O} \left( n d^2 \right)$ \cite{kumar2005minimum}, respectively. Together, this costs $\mathcal{O} \left(n d^2 \left( \log\log d + \delta^{-1} \right) \right)$.

The computational complexity of our approach is as follows. We calculate approximate leverage scores in $\mathcal{O} \left( nd \right)$ time, which, with high probability, satisfy $\hat{\ell}_i \left( \bm{X} \right) = \left( 1 \pm \mathcal{O} \left( \beta \right) \right) \ell_i \left( \bm{X} \right)$, for some $\beta \in (0,1)$ \cite{eshragh2023sequential}. Then Algorithm \ref{alg:DLSS_thresh} also runs in $\mathcal{O} \left( nd \right)$ time. We then use the WA algorithm with KY initialisation to find an optimal solution to \eqref{Duals}. Since $\bm{X}_s \in \text{\rm{I\!R}}^{s \times d}$, these algorithms have time complexity $\mathcal{O} \left(s d^2 \log \log d \right)$ and $\mathcal{O} \left( sd^2 \right)$, respectively. Thus, the total time complexity is $O\left( nd + sd^{2}\left( \log\log d + \delta^{-1} \right) \right)$. Further, if the leverage scores exhibit a power law decay, then $s = \text{poly}\left( d \left( 1 + \mathcal{O} \left( \beta \right) \right), \frac{1}{\left( 1 - \mathcal{O} \left( \beta \right) \right) \varepsilon} \right)$ (see Appendix \ref{app:dalss}). 
\subsection{Comparison with the work of Cohen et. al.}
Recently, Cohen et. al. \cite{cohen2019near} developed the Fixed Point algorithm, which computes a $\delta$-primal feasible solution to \eqref{Dual}. Woodruff and Yasuda \cite{woodruff2023new} extend this result; by sampling rows of $\bm{X}$ with probabilities proportionate to the weights calculated by the Fixed Point algorithm, they obtain a $\delta$-approximately optimal solution to \eqref{Dual}.

The computational complexity of the Fixed Point algorithm is as follows. We examine Algorithm 2 from \cite{cohen2019near}, which uses sketching techniques from randomised numerical linear algebra to speed up each iteration. Theorem C.7 from \cite{cohen2019near} states that Algorithm 2 takes at most $O\left( \delta^{-1} \log \left( \frac{n}{d} \right) \right)$ iterations to complete. In each iteration there are $O\left(\delta^{-1}\right)$ linear systems of the form $\bm{A}^\intercal \bm{W} \bm{A} \bm{x} = \bm{b}$ to be solved. Assuming $\bm{A}$ is dense, solving each of these linear systems costs $O(nd)$. This gives a total time complexity of $O\left( \delta^{-2} nd \log \left( \frac{n}{d} \right) \right)$ for dense matrices.

In big data regimes with $n\gg d$, it is reasonable to assume that $\delta < \frac{1}{d}$ is desirable. This assumption holds for many problem instances considered in the MVCE literature, since these problems typically have dimension $d \leq 200$, and are solved until tolerance $\delta = 10^{-6}$ (or stricter) is achieved (see, e.g. \cite{sun2004computation,damla2008linear,yu2011d,kallberg2019active,kudela2019minimum}). With this assumption, Algorithm 2 has total time complexity $O\left( nd^{3} \log (nd) \right)$ and our algorithm has total time complexity $O\left( nd + sd^{3} \right)$, where $s\leq n$. Thus, in the context of tall data matrices with dense input, our algorithm outperforms the Fixed Point algorithm theoretically. (This also holds numerically, see Appendix \ref{app:comparison}.)
\section{Initial optimality gap}
\label{sec:TheoreticalResults}
We would like to know how well MVCE$\left(\mathcal{X}_s \right)$ approximates MVCE$\left(\mathcal{X} \right)$. We will first provide an upper bound for the initial optimality gap, for a particular choice of initial $\bm{u}$. This $\bm{u}$ must be feasible for Problem \eqref{Duals}, that is, we must have $\sum_{i=1}^s u_i = 1$, and $\bm{u} \geq \bm{0}$. We choose
\begin{align*}
    \bm{u}_0 = \begin{bmatrix} \frac{1}{s}, \dots, \frac{1}{s} \end{bmatrix}^\intercal \in \text{\rm{I\!R}}^{s}.
\end{align*} 
Then the initial optimality gap is given by
\begin{align*}
    g^* - g_s( \bm{u}_0 ),
\end{align*}
where $g^*$ is the optimal objective value when the full set $\mathcal{X}$ is considered.

To derive our bound, we will compare our initial solution with an initialisation due to Khachiyan \cite{khachiyan1996rounding}. Khachiyan's initialisation $\bm{u}_K$ puts equal weight on all $n$ points of $\mathcal{X}$, and guarantees that
\begin{align*}
    g^* - g(\bm{u}_K) \leq d \log n,
\end{align*}
as shown by Khachiyan \cite{khachiyan1996rounding}. Therefore, our bound can be found by exploiting the fact that
\begin{align*}
    g^* - g_s(\bm{u}_0) &= (g^* - g(\bm{u}_K)) + (g(\bm{u}_K) - g_s(\bm{u}_0)).
\end{align*}
Upon simplification, 
\begin{align} \label{eq:g(uk)}
    g(\bm{u}_K) = \log \det \left( \bm{X}^\intercal \bm{X} \right) - d \log n,
\end{align}
and, similarly,
\begin{align} \label{eq:gs(u0)}
    g_s(\bm{u}_0) = \log\det\left( \bm{X}_s^\intercal \bm{X}_s \right) - d\log s.
\end{align}
Hence
\begin{align}\label{eq:initoptgap1}
    g^* - g_s(\bm{u}_0)
    &\leq \log \det \left( \bm{X}^\intercal \bm{X} \right) - \log\det\left( \bm{X}_s^\intercal \bm{X}_s \right) + d\log s.
\end{align}
This suggests the need to compare $\log\det\left( \bm{X}_s^\intercal \bm{X}_s \right)$ with $\log \det \left( \bm{X}^\intercal \bm{X} \right)$.

Suppose we construct $\bm{X}_s = \bm{R} \bm{X}$ as in Algorithm \ref{alg:DLSS_thresh}. Then Theorem \ref{thrm:thresh} guarantees that
\begin{align*}
    (1-\varepsilon) \bm{X}^\intercal \bm{X} \prec \bm{X}_s^\intercal \bm{X}_s.
\end{align*}
Now, for positive semidefinite matrices $\bm{A}$, $\bm{B}$, with $\bm{A} \preceq \bm{B}$, we have
\begin{align} \label{eq:det(A)<=det(B)}
    \det \bm{A} \leq \det \bm{B}.
\end{align}
Since the logarithm is monotonic, it then follows that $\log \det \bm{A} \leq \log \det \bm{B}$. Hence
\begin{align}\label{eq:initoptgap2}
      \log \det \left( \bm{X}^\intercal \bm{X} \right) 
      < \log \det \left( \bm{X}_s^\intercal \bm{X}_s \right) - d \log (1-\varepsilon).
\end{align}
Combining Equations \eqref{eq:initoptgap1} and \eqref{eq:initoptgap2}, we obtain the bound presented in Theorem \ref{thrm:initoptgap}.
\begin{theorem}\label{thrm:initoptgap}
    Select $\bm{X}_s = \bm{R} \bm{X}$ using Algorithm \ref{alg:DLSS_thresh}, and let our initial solution be given by $\bm{u}_0 = \frac{1}{s} \bm{e}$. Then
    \begin{align*}
        g^* - g_s(\bm{u}_0) < d\log\left(\frac{s}{1-\varepsilon} \right).
    \end{align*}
\end{theorem}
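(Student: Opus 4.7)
The plan is to decompose the optimality gap by inserting Khachiyan's initialisation $\bm{u}_K = \frac{1}{n}\bm{e} \in \text{\rm{I\!R}}^n$ as a bridge between the unsampled and the sampled problems, writing
$g^* - g_s(\bm{u}_0) = [g^* - g(\bm{u}_K)] + [g(\bm{u}_K) - g_s(\bm{u}_0)]$. The first bracket is bounded above by $d \log n$ by Khachiyan's classical result, so the work reduces to controlling the second bracket. Because both $\bm{u}_K$ and $\bm{u}_0$ are uniform designs, I would compute each objective in closed form: pulling the constant weight out of the determinant gives $g(\bm{u}_K) = \log\det(\bm{X}^\intercal \bm{X}) - d \log n$ and $g_s(\bm{u}_0) = \log\det(\bm{X}_s^\intercal \bm{X}_s) - d \log s$. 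The two $d\log n$ terms cancel exactly, leaving
$g^* - g_s(\bm{u}_0) \leq \log\det(\bm{X}^\intercal \bm{X}) - \log\det(\bm{X}_s^\intercal \bm{X}_s) + d\log s$.

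The remaining task is to compare the two Gram-matrix determinants, and this is where Theorem \ref{thrm:thresh} enters. That theorem supplies the Loewner inequality $(1-\varepsilon)\bm{X}^\intercal \bm{X} \prec \bm{X}_s^\intercal \bm{X}_s$. Taking determinants on the positive-definite cone (using monotonicity of $\det$ on PSD matrices, together with the fact that scaling a $d \times d$ matrix by $(1-\varepsilon)$ scales its determinant by $(1-\varepsilon)^d$) yields $(1-\varepsilon)^d \det(\bm{X}^\intercal \bm{X}) < \det(\bm{X}_s^\intercal \bm{X}_s)$. Applying $\log$ rearranges this to $\log\det(\bm{X}^\intercal \bm{X}) - \log\det(\bm{X}_s^\intercal \bm{X}_s) < -d\log(1-\varepsilon)$, which substitutes into the decomposition to give the claimed bound $d\log(s/(1-\varepsilon))$.

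The main obstacle I anticipate is simply verifying that the scaling-by-$(1-\varepsilon)$ factor and the PSD-determinant monotonicity combine cleanly with the strict inequality from Theorem \ref{thrm:thresh}; this is a short computation rather than a conceptual difficulty, since the strict Loewner order on symmetric positive-definite matrices transfers to a strict inequality on determinants. Beyond that, the proof is essentially a three-line composition: (i) Khachiyan's bound on the initial gap for the uniform $n$-point design, (ii) explicit evaluation of the D-optimal objective at uniform designs so that the $\log n$ terms telescope, and (iii) the determinantal corollary of the subspace embedding. No further estimates or case analysis appear necessary.
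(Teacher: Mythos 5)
Your proposal is correct and follows essentially the same route as the paper: the same decomposition through Khachiyan's uniform initialisation $\bm{u}_K$, the same closed-form evaluation of $g(\bm{u}_K)$ and $g_s(\bm{u}_0)$ so the $d\log n$ terms cancel, and the same application of Theorem \ref{thrm:thresh} combined with determinant monotonicity on the positive semidefinite cone (which the paper isolates as Lemma \ref{lem:determinants}). No gaps.
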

\begin{proof}
    See Appendix \ref{app:initoptgap}.
\end{proof}
\section{Final optimality gap} 
\label{sec:TheoreticalResults2}
We now provide an upper bound for the final optimality gap. The final optimality gap is given by
\begin{align*}
    g^* - g_s^*,
\end{align*}
where $g_s^*$ is the optimal objective value when only $\mathcal{X}_s$ is considered. Note that for any $\bm{u}$ feasible for \eqref{Duals}, we have
\begin{align*}
    g^* - g_s^* \leq g^* - g_s(\bm{u}),
\end{align*}
since $g_s$ is concave. 

Consider the feasible solution $\Tilde{\bm{u}}_s$ for \eqref{Duals}, given by
\begin{align*}
    \Tilde{\bm{u}}_s = \frac{1}{\bm{e}^\intercal \bm{u}_s} \bm{u}_s,
\end{align*}
where $\bm{u}_s$ contains the first $s$ entries of $\bm{u}^*$. We would like a bound similar to the one in Theorem \ref{thrm:thresh}, with $\bm{X}$ replaced with a rescaled version $\bm{Y}$. More precisely, let
\begin{align*}
    \bm{Y} := \sqrt{\bm{U}^*} \bm{X}.
\end{align*}
To apply Theorem \ref{thrm:thresh}, we require
\begin{align*}
    \sum_{i=s+1}^n \ell_i \left( \bm{Y} \right) < \varepsilon,
\end{align*}
for some $\varepsilon \in (0,1)$. The strategy is to show that
\begin{align} \label{eq:finaloptgap}
    \sum_{i=s+1}^n \ell_i \left( \bm{Y} \right) 
    \leq \sum_{i=s+1}^n \ell_i \left( \bm{X} \right),
\end{align}
which is less than $\varepsilon$ by construction of Algorithm \ref{alg:DLSS_thresh}. Therefore, we may apply Theorem \ref{thrm:thresh} with $\bm{Y}$ instead of $\bm{X}$, to obtain the bound
\begin{align*}
    (1-\varepsilon) \bm{X}^\intercal \bm{U}^* \bm{X} \prec \bm{X}_s^\intercal \bm{U}_s \bm{X}_s.
\end{align*}
That is, the feasible solution $\Tilde{\bm{u}}_s$ satisfies
\begin{align*}
    (1-\varepsilon) \bm{X}^\intercal \bm{U}^* \bm{X} 
    \preceq \frac{1-\varepsilon}{\bm{e}^\intercal \bm{u}_s} \bm{X}^\intercal \bm{U}^* \bm{X} 
    \prec \frac{1}{\bm{e}^\intercal \bm{u}_s} \bm{X}_s^\intercal \bm{U}_s \bm{X}_s 
    = \bm{X}_s^\intercal \Tilde{\bm{U}}_s \bm{X}_s.
\end{align*}
Hence
\begin{align*}
    g^* - g_s^* 
    &\leq g^* - g_s(\Tilde{\bm{u}}_s) \\
    &= \log \det \left( \bm{X}^\intercal \bm{U}^* \bm{X} \right) - \log \det \left( \bm{X}_s^\intercal \Tilde{\bm{U}}_s \bm{X}_s \right) \\
    &< \log \det \left( \bm{X}^\intercal \bm{U}^* \bm{X} \right) - \left( d \log (1-\varepsilon) + \log \det \left( \bm{X}^\intercal \bm{U}^* \bm{X} \right) \right) \\
    &= d \log \left( \frac{1}{1-\varepsilon} \right).
\end{align*}

\begin{theorem} \label{thrm:finaloptgap}
    Use Algorithm \ref{alg:DLSS_thresh} to construct $\bm{X}_s = \bm{R} \bm{X}$, with $\varepsilon \in (0,1)$. If there exists an optimal solution $\bm{u}^*$ of \eqref{Dual} satisfying $u_i^* > 0$ for all $i = 1,\dots,s$, then
    \begin{align*}
        g^* - g_s^*
        < d \log \left( \frac{1}{1 - \varepsilon} \right).
    \end{align*}
    Otherwise,
    \begin{align*}
        g^* - g_s^*
        < d \log \left( \frac{1 + \delta}{1 - \varepsilon} \right),
    \end{align*}
    where the parameter $\delta > 0$ can be chosen to be arbitrarily small.
\end{theorem}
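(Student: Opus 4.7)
The text preceding the statement has essentially completed the first case modulo one subspace-embedding claim: the tail inequality
$$\sum_{i=s+1}^n \ell_i(\bm{Y}) \;\le\; \sum_{i=s+1}^n \ell_i(\bm{X}), \qquad \bm{Y} := \sqrt{\bm{U}^*}\,\bm{X}.$$
Given this, the sample chosen by Algorithm \ref{alg:DLSS_thresh} on $\bm{X}$ still suppresses less than an $\varepsilon$ fraction of the leverage mass of $\bm{Y}$, so Theorem \ref{thrm:thresh} applies to $\bm{Y}$ and yields $(1-\varepsilon)\bm{X}^\intercal \bm{U}^* \bm{X} \prec \bm{X}_s^\intercal \bm{U}_s \bm{X}_s$, after which the determinant chain displayed in the excerpt closes the first bound. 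The plan therefore divides into (i) establishing the tail inequality under the positivity hypothesis, and (ii) reducing the general case to (i) by perturbation.

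For (i), the plan is to unpack
$$\ell_i(\bm{Y}) \;=\; u_i^*\,\bm{x}_i^\intercal (\bm{X}^\intercal \bm{U}^* \bm{X})^{-1}\bm{x}_i,$$
and invoke the KKT conditions of \eqref{Dual}: $\bm{x}_i^\intercal (\bm{X}^\intercal\bm{U}^*\bm{X})^{-1}\bm{x}_i \leq d$ for every $i$, with equality on $\operatorname{supp}(\bm{u}^*)$. This gives $\ell_i(\bm{Y}) \leq d\, u_i^*$ and, because $\sum_i \ell_i(\bm{Y}) = d = \sum_i d\,u_i^*$, equality must hold on the support, so $\sum_{i=s+1}^n \ell_i(\bm{Y}) = d\sum_{i=s+1}^n u_i^*$. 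What remains, and what I expect to be the main obstacle, is the reverse comparison
$$d\!\sum_{i=s+1}^n u_i^* \;\le\; \sum_{i=s+1}^n \ell_i(\bm{X}),$$
equivalently $\sum_{i=1}^s \ell_i(\bm{X}) \leq d\sum_{i=1}^s u_i^*$. This must be extracted from the hypothesis $\{1,\dots,s\} \subseteq \operatorname{supp}(\bm{u}^*)$ together with the structural fact that D-optimal weight concentrates on the extremal (high-leverage) rows. A natural route is the variational identity $\ell_i(\bm{X}) = \sup_{\bm{v}} (\bm{x}_i^\intercal\bm{v})^2/\|\bm{X}\bm{v}\|^2$ combined with a rearrangement argument against the analogous supremum for $\bm{Y}$; as a backup, a Schur-complement computation comparing the top-$s$ and bottom-$(n-s)$ blocks of $\bm{X}^\intercal \bm{U}^*\bm{X}$ versus $\bm{X}^\intercal \bm{X}$ should also succeed.

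For (ii), if no optimal $\bm{u}^*$ has positive entries across all top-$s$ indices, I perturb the design by $\bar{\bm{u}} := (1-\eta)\bm{u}^* + (\eta/n)\bm{e}$ for a small $\eta > 0$. Since $\bar{\bm{u}}$ is strictly positive everywhere, the hypothesis of (i) is met trivially when $\bar{\bm{u}}$ plays the role of $\bm{u}^*$, but $\bar{\bm{u}}$ is no longer optimal, so the KKT identity is replaced by the weaker bound $\bm{x}_i^\intercal \bm{Q}(\bar{\bm{u}})\bm{x}_i \leq d/(1-\eta)$, which follows from $\bm{X}^\intercal \bar{\bm{U}} \bm{X} \succeq (1-\eta)\bm{X}^\intercal \bm{U}^* \bm{X}$. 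Re-running the first-case argument with $\bar{\bm{u}}$ loses this factor $1+\delta := 1/(1-\eta)$ in each application of the KKT-derived bound; combining with $g(\bar{\bm{u}}) \to g^*$ by continuity yields $g^* - g_s^* < d\log((1+\delta)/(1-\varepsilon))$, and $\delta$ can be driven to $0$ by choosing $\eta$ arbitrarily small.
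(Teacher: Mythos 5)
There is a genuine gap: the heart of the proof is Inequality \eqref{eq:finaloptgap}, namely $\sum_{i=s+1}^n \ell_i(\bm{Y}) \le \sum_{i=s+1}^n\ell_i(\bm{X})$, and you do not prove it. Your KKT observation that $\ell_i(\bm{Y}) = d\,u_i^*$ for every $i$ (equality on the support by the equivalence theorem, and $0=0$ off it) is correct and cleanly rewrites the target as $\sum_{i=1}^s\ell_i(\bm{X}) \le d\sum_{i=1}^s u_i^*$, but this rewriting is exactly equivalent to the inequality to be shown, and the two routes you sketch for it (a variational/rearrangement argument; a Schur-complement computation that ``should also succeed'') are not carried out and are not obviously viable --- in simple examples the inequality holds only with near-equality, so it is delicate. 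The paper's proof of this step (Proposition \ref{prop:levY1} in Appendix \ref{app:finaloptgap}) is a different, non-variational argument: it uses the exact formula of Lemma \ref{lem:scaledrowlev} for how all leverage scores change when a single row is rescaled, rescales $\sqrt{\bm{U}^*}$ so that the first $s$ diagonal entries are $\ge 1$ and the remaining ones are $\le 1$ (after a WLOG reduction obtained by splitting rows), and concludes that scaling up the head rows and scaling down the tail rows can only move leverage mass from the tail to the head while the total stays equal to $d$. Notably, that argument never invokes optimality of $\bm{u}^*$ --- only strict positivity of its first $s$ entries.

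That last point is also what breaks your Case 2. Your perturbation $\bar{\bm{u}} = (1-\eta)\bm{u}^* + (\eta/n)\bm{e}$ and the bound $\bm{x}_i^\intercal\bm{Q}(\bar{\bm{u}})\bm{x}_i \le d/(1-\eta)$, giving $g^*-g(\bar{\bm{u}})\le d\log(1+\delta)$ via Proposition \ref{prop:optimality gap}, match the paper's strategy of passing to a strictly positive $\delta$-feasible design. But ``re-running the first-case argument'' does not go through along your route: with $\bar{\bm{u}}$ non-optimal you only obtain $\ell_i(\bar{\bm{Y}}) \le (1+\delta)\,d\,\bar{u}_i$, and you have no control on $d\sum_{i>s}\bar{u}_i$ --- indeed, in Case 2 the top-$s$ leverage rows of $\bm{X}$ need not carry most of the optimal weight, which is precisely the difficulty. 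The paper sidesteps this by establishing the tail inequality $\sum_{i>s}\ell_i(\bar{\bm{Y}})\le\sum_{i>s}\ell_i(\bm{X})$ structurally for an arbitrary strictly positive weight vector. So both cases of your proposal ultimately rest on the one inequality you have not established.
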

\begin{proof}
    See Appendix \ref{app:finaloptgap}.
\end{proof}
\section{Numerical results} 
\label{sec:empirical_results}
We generate three large datasets of size $n = 10^7$, $d~=~100$. Without loss of generality, we assume the leverage scores are sorted in descending order, that is, $\ell_1 \left( \bm{X} \right) \geq \dots \geq \ell_n \left( \bm{X} \right)$. The first dataset is Rotated Cauchy \cite{todd2016book}. The points are generated so that they have rotational symmetry, and the distances of the points from the origin are Cauchy. The leverage scores of these points quickly decay. The second dataset is Lognormal, which has a shallower leverage score decay. The third dataset is Gaussian, which has leverage scores that are close to uniform. We provide additional numerical results in Appendix \ref{app:numresults}.

All computations are performed on a personal laptop with a 64 bit MacOS 13 operating system, and a 2.4 GHz Quad-Core Intel Core i5 processor with 8 GB of RAM. The algorithms are run using MATLAB (R2021a).

We use the WA algorithm \cite{wolfe1970convergence,atwood1973sequences} to calculate the MVCEs of the two datasets, with $\delta = 10^{-9}$. We initialise using the KY algorithm \cite{kumar2005minimum}. We use Todd's Matlab implementation of these algorithms \cite{todd2016minvol}. Then, for $s$ varying from $0.1$ to 10$\%$ of $n$, we sample from each dataset in three ways: deterministic leverage score sampling, uniform sampling, and randomised leverage score sampling (sampling with probabilities proportionate to the leverage scores \cite{drineas2008relative}). In these results, we use the exact leverage scores.

Let $\Tilde{g}^*$ and $\Tilde{g}_s^*$ be the optimal values obtained when using the WA algorithm on the full and sampled datasets respectively. In Figure \ref{fig:rotcauchy_optgap}, we summarise the calculated optimality gaps $\Tilde{g}^* - \Tilde{g}_s^*$ for the Rotated Cauchy dataset. The deterministic and randomised leverage score sampling performed similarly, with near zero optimality gap for all values of $s$. Uniform sampling performed poorly, its optimality gap decreasing with increasing $s$. In Figure \ref{fig:rotcauchy_time} we summarise the total computation time for calculating the MVCEs. For the sampled datasets, this also includes the computation time for calculating leverage scores (if applicable), and sampling from the dataset. The total computation time for the full dataset was $2450$ seconds, that is, just over $40$ minutes. Uniform sampling was faster than the leverage score sampling methods (due to the calculation of the leverage scores), but had very large optimality gaps (see Figure \ref{fig:rotcauchy_optgap}).

In Figure \ref{fig:lognormal_optgap}, we summarise the calculated optimality gaps $\Tilde{g}^* - \Tilde{g}_s^*$ for the Lognormal dataset. The deterministic and randomised leverage score sampling performed similarly, with zero optimality gap for all values of $s$. Uniform sampling performed poorly, its optimality gap decreasing with increasing $s$. In Figure \ref{fig:lognormal_time} we summarise the total computation time for calculating the MVCEs. The total computation time for the full dataset was $2564$ seconds, that is, just over $42$ minutes. Uniform sampling was generally faster than the leverage score sampling methods (due to the calculation of the leverage scores), but had very large optimality gaps (see Figure \ref{fig:lognormal_optgap}).

In Figure \ref{fig:gaussian_optgap}, we summarise the calculated optimality gaps $\Tilde{g}^* - \Tilde{g}_s^*$ for the Gaussian dataset. Only the deterministic leverage score sampling performed well, with near zero optimality gap for all $s$. The uniform and randomised leverage score sampling performed similarly, with optimality gaps decreasing with increasing $s$. This is unsurprising, since the leverage scores for a Gaussian dataset are close to uniform. In Figure \ref{fig:gaussian_time} we summarise the total computation time for calculating the MVCEs. The total computation time for the full dataset was very large, at $141840$ seconds, that is, over $39$ hours. As with the other dataset, uniform sampling was the fastest, but at the cost of a larger optimality gap (see Figure \ref{fig:gaussian_optgap}). Additionally, both leverage score sampling methods had similar runtimes, but only the deterministic sampling had near zero optimality gap (see Figure \ref{fig:gaussian_optgap}).

Overall, the deterministic leverage score sampling performs the best, achieving both a small optimality gap and greatly decreasing computation time on all three datasets.

\begin{figure}[p]
\vskip 0.1in
\begin{center}
\begin{subfigure}{0.49\textwidth}
\includegraphics[width=\linewidth,trim=0.1cm 0.1cm 1cm 0.2cm,clip]{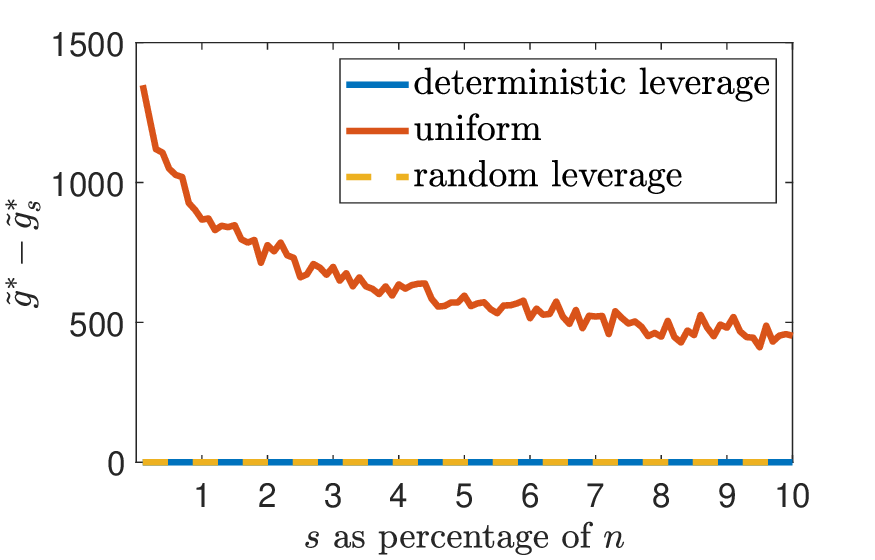}
    \caption{}
    \label{fig:rotcauchy_optgap}
\end{subfigure}
\begin{subfigure}{0.49\textwidth}
\includegraphics[width=\linewidth,trim=0.1cm 0.1cm 1cm 0.2cm,clip]{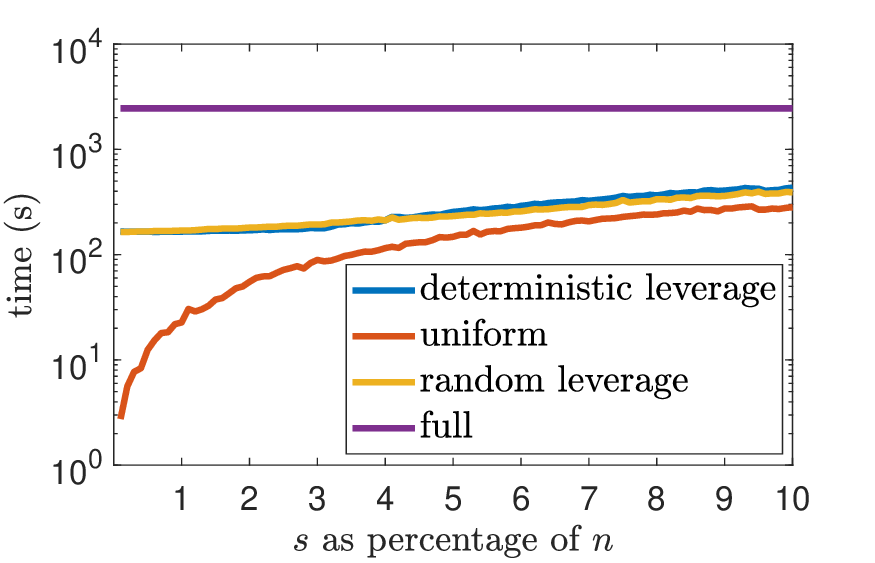}
    \caption{}
    \label{fig:rotcauchy_time}
\end{subfigure}
\caption{Rotated Cauchy: Calculated optimality gap summary (a) and time summary (b).}
\label{fig:rotcauchy}
\end{center}
\vskip -0.1in
\end{figure}

\begin{figure}[p]
\vskip 0.1in
\begin{center}
\begin{subfigure}{0.49\textwidth}
\includegraphics[width=\linewidth,trim=0.1cm 0.1cm 1cm 0.2cm,clip]{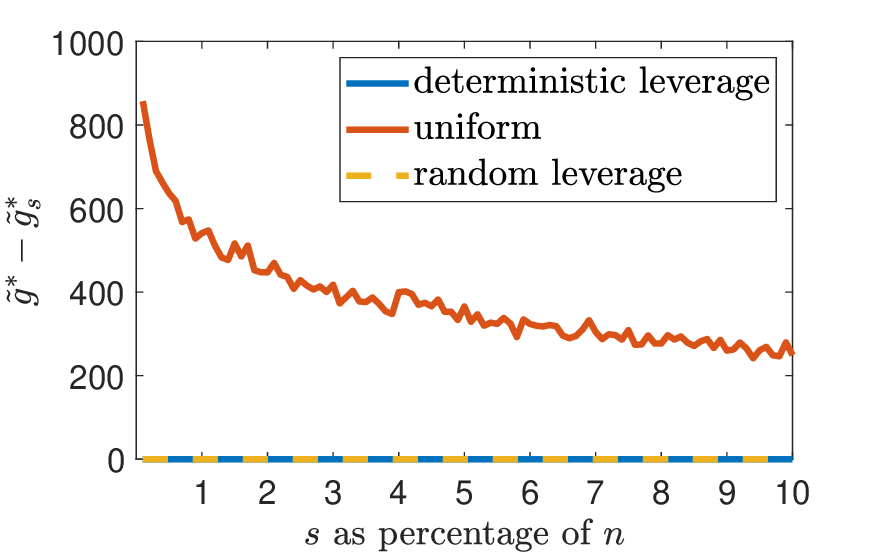}
    \caption{}
    \label{fig:lognormal_optgap}
\end{subfigure}
\begin{subfigure}{0.49\textwidth}
\includegraphics[width=\linewidth,trim=0.1cm 0.1cm 1cm 0.2cm,clip]{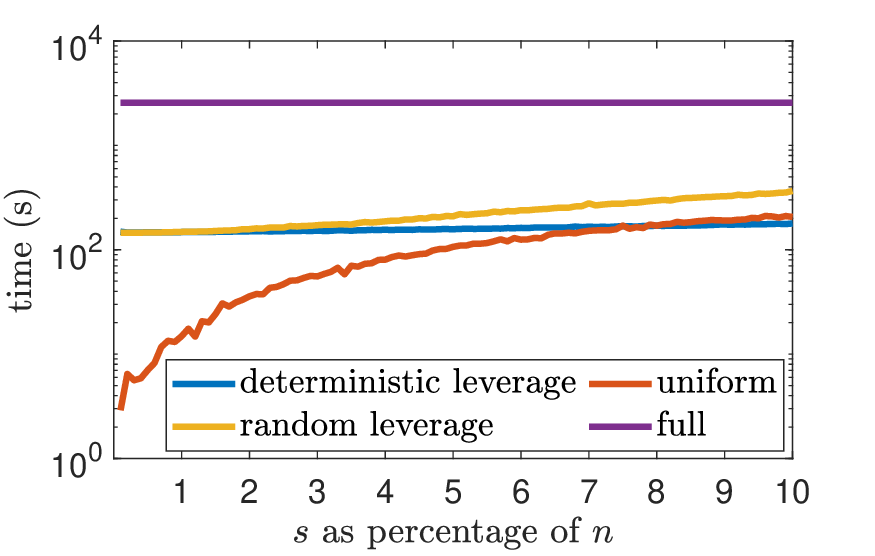}
    \caption{}
    \label{fig:lognormal_time}
\end{subfigure}
\caption{Lognormal: Calculated optimality gap summary (a) and time summary (b).}
\label{fig:lognormal}
\end{center}
\vskip -0.1in
\end{figure}

\begin{figure}[p]
\vskip 0.1in
\begin{center}
\begin{subfigure}{0.49\textwidth}
\includegraphics[width=\linewidth,trim=0.1cm 0.1cm 1cm 0.2cm,clip]{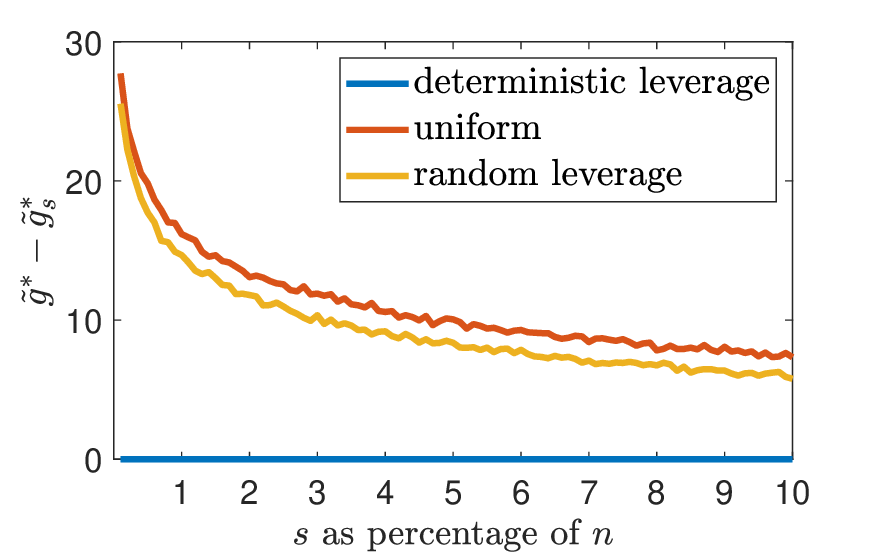}
    \caption{}
    \label{fig:gaussian_optgap}
\end{subfigure}
\begin{subfigure}{0.49\textwidth}
\includegraphics[width=\linewidth,trim=0.1cm 0.1cm 1cm 0.2cm,clip]{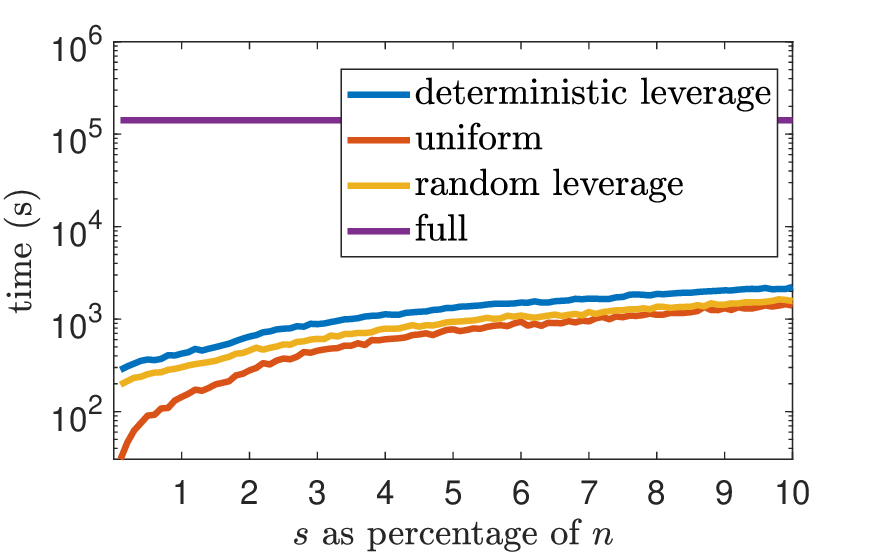}
    \caption{}
    \label{fig:gaussian_time}
\end{subfigure}
\caption{Gaussian: Calculated optimality gap summary (a) and time summary (b).}
\label{fig:gaussian}
\end{center}
\vskip -0.1in
\end{figure}
\section{Conclusion} \label{sec:Conclusion}
In this paper, we have provided the first theoretical guarantees on the quality of initial and final solutions for the MVCE problem, when sampling points deterministically according to their statistical leverage scores. We proved this approach is efficient, assuming the leverage scores exhibit a power law decay. Numerical results show that our data-driven leverage score sampling algorithm performs even better than the established theoretical error bounds, even in cases where the leverage scores are close to uniform distribution, which could be a by-product of our analysis. Future work could include extending these results to other data-driven sampling methods, including randomised leverage score sampling, and also implementing this algorithm extensively on real-world large-scale datasets.
\bibliographystyle{plain}

\newpage
\appendix
\section{Alternative proof for Theorem \ref{thrm:thresh}} \label{app:altproof}
We provide a proof of Theorem \ref{thrm:thresh}, including a new simplified proof of the lower bound.

\begin{theorem} \label{thrm:thresh2}
    Let $\varepsilon \in (0,1).$ Suppose the rows of $\bm{X} \in \text{\rm{I\!R}}^{n \times d}$ are ordered such that $\sum_{i=s+1}^n \ell_i \left( \bm{X} \right) < \varepsilon$, for some $s < n$. Let $\bm{X}_s = \bm{R}\bm{X}$ contain the first $s$ rows of $\bm{X}$. Then
    \begin{align*}
        (1-\varepsilon) \bm{X}^\intercal \bm{X} \prec \bm{X}_s^\intercal \bm{X}_s \preceq \bm{X}^\intercal \bm{X}.
    \end{align*}
\end{theorem}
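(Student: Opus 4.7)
The plan is to handle the two inequalities separately, since they have very different characters. The upper bound $\bm{X}_s^\intercal \bm{X}_s \preceq \bm{X}^\intercal \bm{X}$ is immediate: since $\bm{X}^\intercal \bm{X} = \sum_{i=1}^n \bm{x}_i \bm{x}_i^\intercal$ and $\bm{X}_s^\intercal \bm{X}_s = \sum_{i=1}^s \bm{x}_i \bm{x}_i^\intercal$, the difference equals $\sum_{i=s+1}^n \bm{x}_i \bm{x}_i^\intercal$, which is a sum of rank-one PSD matrices and therefore PSD. This is a one-line argument.

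The substantive step is the lower bound, where we must convert the leverage-score tail bound $\sum_{i=s+1}^n \ell_i(\bm{X}) < \varepsilon$ into an operator-inequality between $\sum_{i=s+1}^n \bm{x}_i \bm{x}_i^\intercal$ and $\varepsilon \bm{X}^\intercal \bm{X}$. To do this I would introduce the (thin) SVD $\bm{X} = \bm{U} \bm{\Sigma} \bm{V}^\intercal$, where $\bm{U} \in \text{\rm{I\!R}}^{n \times d}$ has orthonormal columns, $\bm{\Sigma}$ is the $d \times d$ diagonal matrix of singular values (nonsingular, since $\bm{X}$ is full column rank), and $\bm{V}$ is $d \times d$ orthogonal. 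The key observation is that $\ell_i(\bm{X}) = \|\bm{U}(i,:)\|^2$ since the columns of $\bm{U}$ form an orthonormal basis of $\text{Range}(\bm{X})$, and that $\bm{x}_i^\intercal = \bm{U}(i,:) \bm{\Sigma} \bm{V}^\intercal$.

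For any nonzero $\bm{v} \in \text{\rm{I\!R}}^d$, set $\bm{z} := \bm{\Sigma} \bm{V}^\intercal \bm{v}$, so that $\bm{x}_i^\intercal \bm{v} = \bm{U}(i,:) \bm{z}$ and $\|\bm{z}\|^2 = \bm{v}^\intercal \bm{V} \bm{\Sigma}^2 \bm{V}^\intercal \bm{v} = \bm{v}^\intercal \bm{X}^\intercal \bm{X} \bm{v}$. Cauchy--Schwarz then gives $(\bm{x}_i^\intercal \bm{v})^2 \leq \ell_i(\bm{X}) \|\bm{z}\|^2$, and summing over $i = s+1, \dots, n$ yields
\begin{align*}
    \bm{v}^\intercal \left( \bm{X}^\intercal \bm{X} - \bm{X}_s^\intercal \bm{X}_s \right) \bm{v}
    = \sum_{i=s+1}^n (\bm{x}_i^\intercal \bm{v})^2
    \leq \Bigl( \sum_{i=s+1}^n \ell_i(\bm{X}) \Bigr) \|\bm{z}\|^2
    < \varepsilon\, \bm{v}^\intercal \bm{X}^\intercal \bm{X} \bm{v},
\end{align*}
where the final strict inequality uses the hypothesis $\sum_{i=s+1}^n \ell_i(\bm{X}) < \varepsilon$ together with $\|\bm{z}\|^2 > 0$ (which follows from $\bm{v} \neq \bm{0}$ and the invertibility of $\bm{\Sigma}$ and $\bm{V}^\intercal$). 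Rearranging gives $(1-\varepsilon)\bm{X}^\intercal \bm{X} \prec \bm{X}_s^\intercal \bm{X}_s$, completing the argument.

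The main obstacle, and the place where care is needed, is ensuring the inequality is \emph{strict}. The strictness must come from the leverage-score tail being strictly less than $\varepsilon$, not from the Cauchy--Schwarz step (which can be tight), and I need $\|\bm{z}\|^2 > 0$ for every nonzero $\bm{v}$ to propagate strictness to the quadratic form; this is guaranteed precisely by our standing assumption that $\bm{X}$ has full column rank $d$. The rest of the work is essentially bookkeeping: identifying leverage scores with squared row-norms of $\bm{U}$ and choosing the change of variables $\bm{z} = \bm{\Sigma} \bm{V}^\intercal \bm{v}$ that makes $\|\bm{z}\|^2 = \bm{v}^\intercal \bm{X}^\intercal \bm{X} \bm{v}$ transparent.
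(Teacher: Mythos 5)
Your proof is correct and follows essentially the same route as the paper's: both reduce the lower bound to controlling the tail $\sum_{i=s+1}^n \bm{x}_i \bm{x}_i^\intercal$ by $\varepsilon\,\bm{X}^\intercal\bm{X}$ via the pointwise relation $\bm{x}_i \bm{x}_i^\intercal \preceq \ell_i(\bm{X})\,\bm{X}^\intercal\bm{X}$, and the upper bound is the same observation that the discarded rows contribute a PSD sum. The only difference is that the paper cites that pointwise relation from Cohen et al.\ (Lemma 4 of \cite{cohen2015uniform}), whereas you prove it inline via the SVD change of variables and Cauchy--Schwarz, which makes the argument self-contained and correctly isolates where strictness (full column rank plus the strict tail bound) enters.
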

\begin{proof}
    We consider the upper bound $\left( \bm{R}\bm{X} \right)^\intercal \bm{R}\bm{X} \preceq \bm{X}^\intercal \bm{X}$ first. For any sampling matrix $\bm{R}$, we have that
    \begin{align*}
        \bm{I} - \bm{R}^\intercal \bm{R} \succeq \bm{0}.
    \end{align*}
    Now, the conjugation rule states that if $\bm{A}$ and $\bm{B}$ are symmetric positive semidefinite matrices with $\bm{A} \preceq \bm{B}$, and $\bm{C}$ is any real matrix of compatible dimension, then $\bm{C} \bm{A} \bm{C}^\intercal \preceq \bm{C} \bm{B} \bm{C}^\intercal$. Using this, we obtain
    \begin{align*}
        \bm{0} 
        \preceq \bm{X}^\intercal \left( \bm{I} - \bm{R}^\intercal \bm{R} \right) \bm{X}
        = \bm{X}^\intercal \bm{X} - \left(\bm{R} \bm{X}\right)^\intercal \bm{R} \bm{X},
    \end{align*}
    which rearranges to give our upper bound.
    
    We now prove the lower bound. We will use the fact that for any $i$,
    \begin{align*}
        \bm{x}_i \bm{x}_i^\intercal \preceq \ell_i \left( \bm{X} \right) \bm{X}^\intercal \bm{X},
    \end{align*}
    see, for example, the proof of Lemma 4 in \cite{cohen2015uniform}. Hence
    \begin{align*}
        \bm{X}^\intercal \bm{X} - \left(\bm{R} \bm{X}\right)^\intercal \bm{R} \bm{X}
        = \sum_{i=s+1}^n \bm{x}_i \bm{x}_i^\intercal 
        \preceq \sum_{i=s+1}^n \ell_i \left( \bm{X} \right) \bm{X}^\intercal \bm{X} 
        = \left(\sum_{i=s+1}^n \ell_i \left( \bm{X} \right) \right) \bm{X}^\intercal \bm{X} 
        \prec \varepsilon \bm{X}^\intercal \bm{X},
    \end{align*}
    since $\sum_{i=s+1}^n \ell_i \left( \bm{X} \right) < \varepsilon$. Rearranging, we obtain our lower bound. 
\end{proof}
\section{Deterministic approximate leverage score sampling} \label{app:dalss}
The computation time in Algorithm \ref{alg:DLSS_thresh} is dominated by the cost of calculating the leverage scores exactly, which is $\mathcal{O} \left( nd^2 \right)$. We can instead use approximate leverage scores, which are more computationally efficient to calculate. We demonstrate the approach using the algorithm of Eshragh et al. \cite{eshragh2023sequential}.

\begin{theorem}[\cite{eshragh2023sequential}, Theorem 4] \label{thrm:approxlev}
    Fix a constant $\beta \in (0,1)$. Let the leverage scores of the rows of a matrix $\bm{X} \in \text{\rm{I\!R}}^{n \times d}$ be given by
    $\ell_1 \left( \bm{X} \right), \dots, \ell_n \left( \bm{X} \right)$. Then there exists a randomised algorithm that calculates approximate leverage scores $\hat{\ell}_1 \left( \bm{X} \right), \dots, \hat{\ell}_n \left( \bm{X} \right)$ such that with high probability, simultaneously for all $i = 1,\dots,n$, $\hat{\ell}_i \left( \bm{X} \right) = \left( 1 \pm \mathcal{O} \left( \beta \right)\right) \ell_i \left( \bm{X} \right)$. This algorithm has time complexity $\mathcal{O} \left( nd \right)$.
\end{theorem}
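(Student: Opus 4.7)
The plan is to follow the standard two-sketch recipe for approximate leverage scores pioneered by Drineas et al.\ \cite{drineas2012fast}, and then replace the monolithic Johnson--Lindenstrauss step by the sequential/blockwise sketching idea from SALSA in order to shave the $\log n$ factor. First I would rewrite the leverage scores in a form amenable to sketching: since $\bm{X}$ has full column rank, a thin QR factorisation $\bm{X} = \bm{Q} \bm{R}$ gives $\ell_i(\bm{X}) = \|\bm{x}_i^\intercal \bm{R}^{-1}\|^2$. Computing $\bm{R}$ exactly costs $\mathcal{O}(nd^2)$, so the next move is to approximate it: apply a sparse subspace embedding $\bm{\Pi}_1 \in \text{\rm{I\!R}}^{r_1 \times n}$ (CountSketch / OSNAP with $r_1 = \mathrm{poly}(d/\beta)$) so that $(\bm{\Pi}_1\bm{X})^\intercal \bm{\Pi}_1 \bm{X} = (1 \pm \mathcal{O}(\beta))\, \bm{X}^\intercal \bm{X}$ in the spectral sense. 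Forming $\bm{\Pi}_1 \bm{X}$ uses $\mathcal{O}(nd)$ time by sparsity of $\bm{\Pi}_1$, and a QR step on the $r_1 \times d$ sketch then produces an approximate triangular factor $\tilde{\bm{R}}$ in $\mathrm{poly}(d)$ time; the subspace embedding property yields $\|\bm{x}_i^\intercal \tilde{\bm{R}}^{-1}\|^2 = (1 \pm \mathcal{O}(\beta))\, \ell_i(\bm{X})$ for every $i$.

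At this point evaluating the $n$ quantities $\|\bm{x}_i^\intercal \tilde{\bm{R}}^{-1}\|^2$ directly still costs $\mathcal{O}(nd^2)$, so I would compose with a second Johnson--Lindenstrauss sketch $\bm{\Pi}_2 \in \text{\rm{I\!R}}^{d \times r_2}$ with $r_2 = \mathcal{O}(\beta^{-2} \log n)$, precompute the $d \times r_2$ matrix $\bm{M} := \tilde{\bm{R}}^{-1} \bm{\Pi}_2$ once, and set $\hat{\ell}_i(\bm{X}) := \|\bm{x}_i^\intercal \bm{M}\|^2$. Combining the subspace embedding bound with a union bound over the $n$ instances of the JL lemma then gives the required multiplicative $(1 \pm \mathcal{O}(\beta))$ guarantee simultaneously for all $i$ with high probability.

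The hard part will be tightening this from the Drineas-style $\mathcal{O}(nd \log n)$ down to the strict $\mathcal{O}(nd)$ claimed in the theorem, which is where the sequential structure of SALSA is essential. Rather than multiplying each $\bm{x}_i^\intercal$ against a monolithic $d \times \mathcal{O}(\log n)$ projection, SALSA processes the rows of $\bm{X}$ in blocks, maintains and reuses a running sketch across blocks, and uses a block-adaptive $r_2$ so that the amortised per-row cost is $\mathcal{O}(d)$ and independent of $n$. The main technical obstacle is to control the probability of failure uniformly across all blocks while reusing correlated sketches, and this is precisely what the analysis of Eshragh et al.\ \cite{eshragh2023sequential} carries out; the remaining pieces (subspace embedding distortion, JL concentration, union bound) are standard consequences of the tools introduced above.
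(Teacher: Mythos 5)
This statement is not proved in the paper at all: it is imported verbatim as Theorem~4 of Eshragh et al.~\cite{eshragh2023sequential}, so there is no internal proof to compare your attempt against. Judged on its own terms, your outline correctly reconstructs the classical two-sketch recipe of Drineas et al.~\cite{drineas2012fast} --- a sparse subspace embedding $\bm{\Pi}_1$ to obtain an approximate triangular factor $\tilde{\bm{R}}$ in $\mathcal{O}(nd) + \mathrm{poly}(d)$ time, followed by a Johnson--Lindenstrauss sketch $\bm{\Pi}_2$ with $r_2 = \mathcal{O}(\beta^{-2}\log n)$ columns and a union bound over the $n$ rows. That machinery is sound and yields the $(1\pm\mathcal{O}(\beta))$ multiplicative guarantee with high probability, but only at cost $\mathcal{O}(nd\log n)$.

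The genuine gap is that the entire content of the theorem beyond the classical result is the removal of the $\log n$ factor, and that is exactly the step you do not supply. Your third paragraph gestures at SALSA's blockwise, sequential reuse of sketches with an adaptive $r_2$, but then states that controlling the failure probability across correlated, reused sketches ``is precisely what the analysis of Eshragh et al.\ carries out.'' As a proof of the cited theorem this is circular: you are invoking the conclusion of the reference to justify the one claim that distinguishes the theorem from the Drineas et al.\ bound. Note also a tension you should confront if you want to close the gap: a straightforward union bound over $n$ rows with an $r_2$ that is independent of $n$ cannot give a simultaneous $(1\pm\mathcal{O}(\beta))$ guarantee for all $i$ via standard JL concentration alone, so any genuine $\mathcal{O}(nd)$ argument must exploit additional structure (e.g.\ the sequential/autoregressive structure assumed in \cite{eshragh2023sequential}, or a constant failure probability per block amplified differently). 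Without spelling out that mechanism, the attempt establishes only the weaker $\mathcal{O}(nd\log n)$ statement.
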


We then sample deterministically according to these approximate leverage scores. We summarise the modified sampling procedure in Algorithm \ref{alg:DALSS_thresh}. For simplicity, assume we can write $\hat{\ell}_i \left( \bm{X} \right) = \left( 1 \pm \alpha \right) \ell_i \left( \bm{X} \right)$, for some $\alpha \in (0,1)$.

\begin{algorithm}
    \begin{algorithmic}[1]
        \REQUIRE $\bm{X} = \begin{bmatrix} \bm{x}_1, \dots, \bm{x}_n \end{bmatrix}^\intercal \in \text{\rm{I\!R}}^{n \times d}, \alpha \in (0,1), \varepsilon \in (0,1)$
        \STATE Calculate approximate leverage scores for each row in $\bm{X}$, using Theorem \ref{thrm:approxlev}. Without loss of generality, let $\hat{\ell}_1 \left( \bm{X} \right) \geq \dots \geq \hat{\ell}_n \left( \bm{X} \right)$.
        \STATE Let $t = \sum_{i=1}^n \hat{\ell}_i \left( \bm{X} \right)$.
        \STATE Let $s = \arg \min_j \left( \sum_{i=1}^j \hat{\ell}_i \left( \bm{X} \right) \geq t - \left( 1 - \alpha \right) \varepsilon \right)$.
        \STATE Let $\bm{R} = \bm{0} \in \text{\rm{I\!R}}^{s \times n}$.
        \FOR{$i = 1:s$}
            \STATE Set row $i$ of $\bm{R}$ equal to $\bm{e}_i$.
        \ENDFOR
        \ENSURE $\bm{R}, s$
        \caption{Deterministic Approximate Leverage Score Sampling with Threshold}
        \label{alg:DALSS_thresh}
    \end{algorithmic}
\end{algorithm}

We have chosen $s$ carefully, to ensure the following subspace embedding result.

\begin{corollary}\label{corr:analogous}
    Let $\alpha \in (0,1)$, and $\varepsilon \in (0,1)$. Use Algorithm \ref{alg:DALSS_thresh} to construct $\bm{X}_s = \bm{R} \bm{X}$. Then, with high probability, we have
    \begin{align*}
        (1-\varepsilon) \bm{X}^\intercal \bm{X} \prec \bm{X}_s^\intercal \bm{X}_s \preceq \bm{X}^\intercal \bm{X}.
    \end{align*}
\end{corollary}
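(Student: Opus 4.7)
The plan is to mirror the proof of Theorem \ref{thrm:thresh2}, adjusting only for the fact that we now sort by approximate leverage scores. Let $S \subset \{1,\dots,n\}$ denote the set of row indices selected by Algorithm \ref{alg:DALSS_thresh} (the first $s$ rows after sorting by $\hat{\ell}_i$). The upper bound $\bm{X}_s^\intercal \bm{X}_s \preceq \bm{X}^\intercal \bm{X}$ is deterministic and requires no modification whatsoever: for any row-sampling matrix we have $\bm{I} - \bm{R}^\intercal \bm{R} \succeq \bm{0}$, and conjugating by $\bm{X}$ gives the claim. All of the work sits in the strict lower bound.

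For the lower bound, the same rank-one decomposition used in Theorem \ref{thrm:thresh2} gives
\begin{align*}
    \bm{X}^\intercal \bm{X} - \bm{X}_s^\intercal \bm{X}_s \;=\; \sum_{i \notin S} \bm{x}_i \bm{x}_i^\intercal \;\preceq\; \left(\sum_{i \notin S} \ell_i(\bm{X})\right) \bm{X}^\intercal \bm{X},
\end{align*}
using $\bm{x}_i \bm{x}_i^\intercal \preceq \ell_i(\bm{X}) \bm{X}^\intercal \bm{X}$. So it suffices to prove that, with high probability, $\sum_{i \notin S} \ell_i(\bm{X}) < \varepsilon$, from which rearranging gives the desired $(1-\varepsilon)\bm{X}^\intercal\bm{X} \prec \bm{X}_s^\intercal \bm{X}_s$.

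To bound $\sum_{i \notin S} \ell_i(\bm{X})$ I would combine two ingredients. First, by the choice of $s$ in Algorithm \ref{alg:DALSS_thresh}, $\sum_{i=1}^s \hat{\ell}_i \geq t - (1-\alpha)\varepsilon$, which rearranges to $\sum_{i \notin S} \hat{\ell}_i \leq (1-\alpha)\varepsilon$. Second, invoking Theorem \ref{thrm:approxlev} (with the constant absorbed so that $\mathcal{O}(\beta) = \alpha$), with high probability $\hat{\ell}_i \geq (1-\alpha)\ell_i(\bm{X})$ simultaneously for all $i$. Chaining these gives
\begin{align*}
    (1-\alpha)\sum_{i \notin S} \ell_i(\bm{X}) \;\leq\; \sum_{i \notin S} \hat{\ell}_i \;\leq\; (1-\alpha)\varepsilon,
\end{align*}
so $\sum_{i \notin S} \ell_i(\bm{X}) \leq \varepsilon$, and the inequality is strict on the high-probability event because the multiplicative approximation in Theorem \ref{thrm:approxlev} is strict.

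The subtle point — and what I expect to be the main obstacle worth flagging — is that $S$ is defined by the ordering of the \emph{approximate} scores, so $S$ need not coincide with the top-$s$ rows by exact leverage score: some rows with large $\ell_i(\bm{X})$ could, a priori, fail to be selected. The escape is that the rank-one-sum argument does not care which particular indices lie in $S$; it only needs the tail sum $\sum_{i \notin S} \ell_i(\bm{X})$ to be small, and that tail is controlled purely through the algorithm's threshold combined with the two-sided multiplicative approximation. The \emph{with high probability} qualifier in the corollary statement then inherits directly from the probabilistic guarantee of Theorem \ref{thrm:approxlev}, requiring no further concentration argument.
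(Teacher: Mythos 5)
Your proof is correct and follows essentially the same route as the paper: both reduce the problem to showing that the tail sum $\sum_{i \notin S}\ell_i(\bm{X})$ of \emph{exact} leverage scores over the unselected rows is below $\varepsilon$ with high probability, by combining the algorithm's threshold on the approximate scores with the simultaneous lower bound $\hat{\ell}_i(\bm{X}) \geq (1-\alpha)\ell_i(\bm{X})$, and then feed this into the machinery of Theorem \ref{thrm:thresh2}. The only differences are cosmetic --- you inline the rank-one-sum argument where the paper simply cites Theorem \ref{thrm:thresh2}, and your observation that the argument is insensitive to which particular indices the approximate ordering selects makes explicit a point the paper leaves implicit.
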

\begin{proof}
    For this proof, we only need to show that $\sum_{i=s+1}^n \ell_i \left( \bm{X} \right) < \varepsilon$ holds with high probability. We can then apply Theorem \ref{thrm:thresh2} to obtain our bound.

    Assume that $\hat{\ell}_i \left( \bm{X} \right) \geq \left( 1 - \alpha \right) \ell_i \left( \bm{X} \right)$ holds simultaneously for all $i = 1,\dots,n$. By construction of Algorithm \ref{alg:DALSS_thresh}, we have
    \begin{align*}
        \sum_{i=s+1}^n \hat{\ell}_i \left( \bm{X} \right)
        < \left( 1 - \alpha \right) \varepsilon.
    \end{align*}
    Then since $\hat{\ell}_i \left( \bm{X} \right) \geq \left( 1 - \alpha \right) \ell_i \left( \bm{X} \right)$, for all $i = 1,\dots,n$, we have
    \begin{align*}
        \left( 1 - \alpha \right) \varepsilon
        > \sum_{i=s+1}^n \hat{\ell}_i \left( \bm{X} \right)
        \geq \sum_{i=s+1}^n \left( 1 - \alpha \right) \ell_i \left( \bm{X} \right)
        = \left( 1 - \alpha \right) \sum_{i=s+1}^n \ell_i \left( \bm{X} \right),
    \end{align*}
    that is,
    \begin{align*}
        \sum_{i=s+1}^n \ell_i \left( \bm{X} \right) < \varepsilon.
    \end{align*}
\end{proof}

\begin{corollary}\label{cor:sbound}
    If the leverage scores exhibit a power law decay, then $s = \text{poly}\left( d \left( 1 + \alpha \right), \frac{1}{\left( 1 - \alpha \right) \varepsilon} \right)$.
\end{corollary}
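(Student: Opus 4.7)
The plan is to translate the threshold condition $\sum_{i=s+1}^{n} \hat{\ell}_i(\bm{X}) < (1-\alpha)\varepsilon$ (which defines $s$ in Algorithm \ref{alg:DALSS_thresh}) into an explicit bound on $s$ via the power law hypothesis. I will first make the assumption precise: power law decay means there exist constants $c>0$ and $p>1$ (with $p$ independent of $n, d$) such that $\ell_i(\bm{X}) \leq c\, i^{-p}$ for all $i$. Because $\sum_{i=1}^n \ell_i(\bm{X}) = d$ and the series $\sum i^{-p}$ converges to a finite constant $\zeta(p)$, the constant $c$ is necessarily of order $d$, i.e.\ $c \leq C_p\, d$ for some constant $C_p$ depending only on $p$. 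This is the step that lets $d$ enter the final bound.

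Next, using $\hat{\ell}_i(\bm{X}) \leq (1+\alpha)\ell_i(\bm{X})$ (which holds with high probability by Theorem \ref{thrm:approxlev}), the tail of the approximate leverage scores satisfies
\begin{align*}
    \sum_{i=s+1}^{n} \hat{\ell}_i(\bm{X})
    \leq (1+\alpha)\, c \sum_{i=s+1}^{n} i^{-p}
    \leq (1+\alpha)\, c \int_{s}^{\infty} x^{-p}\, dx
    = \frac{(1+\alpha)\, c}{p-1}\, s^{-(p-1)}.
\end{align*}
It therefore suffices, for $\sum_{i=s+1}^n \hat{\ell}_i(\bm{X}) < (1-\alpha)\varepsilon$ to hold, that
\begin{align*}
    \frac{(1+\alpha)\, c}{p-1}\, s^{-(p-1)} \leq (1-\alpha)\varepsilon,
    \qquad \text{i.e.} \qquad
    s \geq \left( \frac{(1+\alpha)\, c}{(p-1)(1-\alpha)\varepsilon} \right)^{1/(p-1)}.
\end{align*}
Substituting $c \leq C_p\, d$ gives
\begin{align*}
    s = \mathcal{O}\!\left( \left( \frac{d(1+\alpha)}{(1-\alpha)\varepsilon} \right)^{1/(p-1)} \right),
\end{align*}
which is exactly $\text{poly}\!\left( d(1+\alpha),\, \frac{1}{(1-\alpha)\varepsilon} \right)$ because $p$ is a fixed constant.

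The main subtlety, rather than a genuine obstacle, is justifying that the proportionality constant $c$ in the power law is $\mathcal{O}(d)$. This is where the identity $\sum_i \ell_i(\bm{X}) = d$ is used: for any fixed decay exponent $p>1$, the mass of the full leverage score distribution is $d$, so normalising a pure $i^{-p}$ profile to total mass $d$ forces the leading constant to scale linearly in $d$. The rest is bookkeeping: an integral tail estimate and one inversion of a power. I would also briefly remark that the same argument goes through for any decay strong enough to make the tail summable (e.g.\ $\ell_i \leq c\, i^{-p}$ with $p>1$), so the "power law" wording is essentially the weakest convergence condition under which the $s \ll n$ guarantee can be asserted in closed form.
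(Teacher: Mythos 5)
Your proof is correct in substance but follows a genuinely different route from the paper's. The paper does not re-derive the sample-size bound at all: it quotes the closed-form expression of Papailiopoulos et al.\ for Algorithm \ref{alg:DLSS_thresh} under power-law decay, observes that Algorithm \ref{alg:DALSS_thresh} is the identical procedure with $d$ replaced by $t=\sum_i\hat{\ell}_i(\bm{X})$ and $\varepsilon$ replaced by $(1-\alpha)\varepsilon$, and then closes with $t\le(1+\alpha)d$. You instead reprove the underlying tail estimate from scratch with an integral test; this buys a self-contained argument and an explicit exponent $1/(p-1)$, at the cost of re-deriving what the paper treats as a black box. Two points to tighten. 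First, the bound $c\le C_p\,d$ does not follow from convergence of $\sum_i i^{-p}$ alone --- that direction of the normalisation identity only gives $c\ge d/\zeta(p)$; you need the power law as an exact profile $\ell_i(\bm{X})=c\,i^{-p}$, in which case $c=d/\sum_{i\le n}i^{-p}\le d$ because the partial sum is at least $1$. That is the assumption Papailiopoulos et al.\ actually make, so the conclusion stands, but the hypothesis should be stated in that form. Second, Algorithm \ref{alg:DALSS_thresh} sorts by the \emph{approximate} scores $\hat{\ell}_i(\bm{X})$ while your power-law profile orders the \emph{exact} scores, and the two orderings need not coincide; the fix is one line (the sum of the $n-s$ smallest values of $\hat{\ell}$ is at most the sum of $\hat{\ell}$ over the indices of the $n-s$ smallest values of $\ell$, which your estimate then controls via Theorem \ref{thrm:approxlev}), but it should be said.
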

\begin{proof}
    Papailiopoulos et al. \cite{papailiopoulos2014provable} show that if the leverage scores exhibit a power law decay, then Algorithm \ref{alg:DLSS_thresh} outputs
    \begin{align*}
        s 
        = \arg \min_j \left( \sum_{i=1}^j \ell_i \left( \bm{X} \right) 
        > d - \varepsilon \right) 
        = \max \left\lbrace \left( \frac{2d}{\varepsilon} \right)^{\frac{1}{1+\eta}} - 1, \left( \frac{2d}{\eta \varepsilon} \right)^{\frac{1}{\eta}} - 1, d \right\rbrace
        = \text{poly}\left( d, \frac{1}{ \varepsilon} \right).
    \end{align*}
    Then, comparing Algorithm \ref{alg:DLSS_thresh} with Algorithm \ref{alg:DALSS_thresh}, we need only replace $d$ with $t$, and $\varepsilon$ with $\left( 1 - \alpha \right) \varepsilon$. That is, Algorithm \ref{alg:DALSS_thresh} outputs
    \begin{align*}
        s 
        = \arg \min_j \left( \sum_{i=1}^j \hat{\ell}_i \left( \bm{X} \right) > t - \left( 1 - \alpha \right) \varepsilon \right) 
        = \text{poly}\left( t, \frac{1}{ \left( 1 - \alpha \right) \varepsilon} \right).
    \end{align*}
    But
    \begin{align*}
        t
        = \sum_{i=1}^n \hat{\ell}_i \left( \bm{X} \right) 
        \leq \sum_{i=1}^n \left( 1 + \alpha \right) \ell_i \left( \bm{X} \right) 
        = \left( 1 + \alpha \right) \sum_{i=1}^n \ell_i \left( \bm{X} \right) 
        = \left( 1 + \alpha \right) d.
    \end{align*}
    Hence $s = \text{poly}\left( d \left( 1 + \alpha \right), \frac{1}{\left( 1 - \alpha \right) \varepsilon} \right)$.
\end{proof}
\section{Missing details in proof of Theorem \ref{thrm:initoptgap}} \label{app:initoptgap}
\subsection{Proof of Equations \eqref{eq:g(uk)} and \eqref{eq:gs(u0)}}
We have that
\begin{align*}
    g(\bm{u}_K)
    &= \log \det \left( \bm{X}^\intercal \bm{U}_K \bm{X} \right) \\
    &= \log \det \left( \frac{1}{n} \bm{X}^\intercal \bm{X} \right) \\
    &= \log \left( \frac{1}{n^d} \det \left( \bm{X}^\intercal \bm{X} \right) \right) \\
    &= \log \det \left( \bm{X}^\intercal \bm{X} \right) - d \log n,
\end{align*}
and, similarly,
\begin{align*}
    g_s(\bm{u}_0)
    &= \log \det \left( \bm{X}_s^\intercal \bm{U}_0 \bm{X}_s \right) \\
    &= \log \det \left( \frac{1}{s} \bm{X}_s^\intercal \bm{X}_s \right) \\
    &= \log \det \left( \bm{X}_s^\intercal \bm{X}_s \right) - d \log s.
\end{align*}
\subsection{Proof of Inequality \eqref{eq:det(A)<=det(B)}}
We first need to prove the following lemma.
\begin{lemma}\label{lem:eigenvalues}
    Suppose we have two $d \times d$ symmetric positive semidefinite matrices $\bm{A}$ and $\bm{B}$, with $\bm{A} \preceq \bm{B}$. Let the eigenvalues of $\bm{A}$ be given by $\lambda_1\left( \bm{A} \right) \leq \dots \leq \lambda_d\left( \bm{A} \right)$, and the eigenvalues of $\bm{B}$ be given by $\lambda_1\left( \bm{B} \right) \leq \dots \leq \lambda_d\left( \bm{B} \right)$. Then for all $i = 1,\dots,d$, we have
    \begin{align*}
        \lambda_i\left(\bm{A}\right) \leq \lambda_i\left(\bm{B}\right).
    \end{align*}
\end{lemma}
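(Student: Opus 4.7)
The plan is to use the Courant--Fischer min--max characterisation of the eigenvalues of a symmetric matrix. Recall that for any symmetric $d \times d$ matrix $\bm{M}$ with eigenvalues ordered $\lambda_1(\bm{M}) \le \dots \le \lambda_d(\bm{M})$, one has
\begin{align*}
    \lambda_i(\bm{M}) = \max_{\substack{S \subseteq \text{\rm{I\!R}}^d \\ \dim S = i}} \, \min_{\substack{\bm{v} \in S \\ \|\bm{v}\| = 1}} \, \bm{v}^\intercal \bm{M} \bm{v}.
\end{align*}
I would state this as a known fact from linear algebra (or cite an undergraduate reference), since re-deriving it is not the point of this lemma.

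Next, I would translate the hypothesis $\bm{A} \preceq \bm{B}$ into the pointwise Rayleigh quotient inequality $\bm{v}^\intercal \bm{A} \bm{v} \le \bm{v}^\intercal \bm{B} \bm{v}$ valid for every $\bm{v} \in \text{\rm{I\!R}}^d$. This is literally the definition of the Loewner order on symmetric matrices, so only a single sentence is required.

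Finally, I would fix an index $i \in \{1,\dots,d\}$ and carry the inequality through the min--max formula. For any fixed subspace $S$ of dimension $i$ and any unit vector $\bm{v} \in S$, the pointwise inequality gives $\bm{v}^\intercal \bm{A} \bm{v} \le \bm{v}^\intercal \bm{B} \bm{v}$; taking the minimum over unit $\bm{v} \in S$ on both sides preserves the inequality, and then taking the maximum over $i$-dimensional subspaces $S$ also preserves it. This yields
\begin{align*}
    \lambda_i(\bm{A})
    = \max_{\dim S = i} \min_{\bm{v} \in S, \|\bm{v}\|=1} \bm{v}^\intercal \bm{A} \bm{v}
    \le \max_{\dim S = i} \min_{\bm{v} \in S, \|\bm{v}\|=1} \bm{v}^\intercal \bm{B} \bm{v}
    = \lambda_i(\bm{B}),
\end{align*}
which is exactly the claim.

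There is no real obstacle here: the result is a direct instance of Weyl's monotonicity theorem, and the only care needed is to keep the ordering convention (smallest to largest) consistent with the max--min form of Courant--Fischer rather than accidentally invoking the dual min--max form, which would correspond to labelling eigenvalues in decreasing order. Once the correct variational formula is written down, the monotonicity of $\min$ and $\max$ under pointwise inequalities closes the proof immediately.
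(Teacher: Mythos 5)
Your proposal is correct and follows exactly the route the paper takes: its proof of this lemma is the one-line instruction to ``use the Min-max Theorem, along with the fact that $\bm{x}^\intercal \bm{A} \bm{x} \leq \bm{x}^\intercal \bm{B} \bm{x}$ for all $\bm{x}$,'' which is precisely the Courant--Fischer argument you spell out. Your version is simply a fully written-out form of the same proof, with the correct (ascending) ordering convention matched to the max--min formula.
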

\begin{proof}
    Use Min-max Theorem, along with the fact that $\bm{x}^\intercal \bm{A} \bm{x} \leq \bm{x}^\intercal \bm{B} \bm{x}$ for all $\bm{x}$.
\end{proof}

We are now ready to prove the inequality.
\begin{lemma}\label{lem:determinants}
    Suppose we have two $d \times d$ symmetric positive semidefinite matrices $\bm{A}$ and $\bm{B}$, with $\bm{A} \preceq \bm{B}$. Then
    \begin{align*}
        \det\left( \bm{A} \right) \leq \det\left( \bm{B} \right).
    \end{align*}
\end{lemma}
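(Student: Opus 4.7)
The plan is to reduce this immediately to Lemma \ref{lem:eigenvalues}, which is the substantive step, and then close the argument with a one-line multiplicative inequality on nonnegative reals.

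First I would recall that since $\bm{A}$ and $\bm{B}$ are symmetric positive semidefinite, each admits a spectral decomposition with nonnegative eigenvalues, and its determinant equals the product of those eigenvalues. Writing the eigenvalues in non-decreasing order as $\lambda_1(\bm{A}) \leq \dots \leq \lambda_d(\bm{A})$ and $\lambda_1(\bm{B}) \leq \dots \leq \lambda_d(\bm{B})$, we have
\begin{equation*}
    \det(\bm{A}) = \prod_{i=1}^d \lambda_i(\bm{A}), \qquad \det(\bm{B}) = \prod_{i=1}^d \lambda_i(\bm{B}),
\end{equation*}
with every factor nonnegative.

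Next, I invoke Lemma \ref{lem:eigenvalues}: since $\bm{A} \preceq \bm{B}$, we have $0 \leq \lambda_i(\bm{A}) \leq \lambda_i(\bm{B})$ for every $i = 1,\dots,d$. Taking the product of these $d$ inequalities, which is valid because all quantities involved are nonnegative, yields $\det(\bm{A}) \leq \det(\bm{B})$, as required.

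There is really no obstacle here: the work has already been done in Lemma \ref{lem:eigenvalues} (via the Min-max Theorem). The only point worth flagging is the need for nonnegativity of the eigenvalues to pass the inequality through a product; this is guaranteed by positive semidefiniteness, so the argument is a two- or three-line calculation.
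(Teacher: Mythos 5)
Your proof is correct and follows essentially the same route as the paper: both reduce the claim to Lemma \ref{lem:eigenvalues} and then compare the products of the ordered eigenvalues. The one difference is that the paper passes through $\log\det$ and exponentiates back, whereas you multiply the nonnegative inequalities $0 \leq \lambda_i(\bm{A}) \leq \lambda_i(\bm{B})$ directly; your version is marginally cleaner, since it avoids taking logarithms of eigenvalues that may be zero when $\bm{A}$ is merely positive semidefinite rather than definite.
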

\begin{proof}
    We have that
    \begin{align*}
        \log\det\left( \bm{A} \right) 
        &= \log \left( \prod_{i=1}^d \lambda_i\left( \bm{A} \right) \right) \\ 
        &= \sum_{i=1}^d \log \lambda_i\left( \bm{A} \right) \\
        &\leq \sum_{i=1}^d \log \lambda_i\left( \bm{B} \right) \\
        &= \log \left( \prod_{i=1}^d \lambda_i\left( \bm{B} \right) \right) \\
        &= \log\det\left( \bm{B} \right),
    \end{align*}
    where the inequality uses Lemma \ref{lem:eigenvalues} and the fact that $\log$ is a monotonically increasing function. Then
    \begin{align*}
        \det(\bm{A}) 
        = e^{\log \det\left( \bm{A} \right)}
        \leq e^{\log \det\left( \bm{B} \right)}
        = \det(\bm{B}),
    \end{align*}
    where the inequality is because the exponential function is monotonically increasing.
\end{proof}
\section{Final optimality gap} \label{app:finaloptgap}
We break this proof into two cases:
\begin{enumerate}
    \item $u_i^* > 0$, for all $i = 1,\dots,s$, and
    \item $u_i^* \geq 0$, for all $i = 1,\dots,s$.
\end{enumerate}
We do this because the proof of Inequality \eqref{eq:finaloptgap} relies on scaling $\sqrt{\bm{U}^*}$ so that the first $s$ diagonal terms are $\geq 1$, and the last $n-s$ diagonal terms are $\leq 1$. Having any zeros in the first $s$ terms prevents such a scaling.

We consider Case 1 first. First, follow the proof outline from Section \ref{sec:TheoreticalResults2}. Then, what remains to be shown is that Inequality \eqref{eq:finaloptgap} holds.
\subsection{Case 1: $u_i^* > 0,$ for all $i = 1,\dots,s$}
The following lemma will be useful for proving Inequality \eqref{eq:finaloptgap}. It shows the change in leverage scores when one row of $\bm{X}$ is scaled.

\begin{lemma}[\cite{ordozgoiti2022generalized}, Lemma 3.2] \label{lem:scaledrowlev}
    Let $\bm{X} \in \text{\rm{I\!R}}^{n \times d}$ have rank $d$. Let $\alpha \geq 0$. Define $\bm{\Sigma}^{(i)}$ as the diagonal matrix satisfying $\bm{\Sigma}^{(i)}_{i\,i} = \alpha$, $\bm{\Sigma}^{(i)}_{j\,j} = 1$, if $j \neq i$. Then 
    \begin{align*}
        \ell_j\left( \bm{\Sigma}^{(i)} \bm{X} \right)
        = \ell_j\left( \bm{X} \right) - \frac{\left( \alpha^2 - 1 \right) \left( \bm{x}_j^\intercal \left( \bm{X}^\intercal \bm{X} \right)^\intercal \bm{x}_i \right)^2}{1 + \left(\alpha^2 - 1 \right) \ell_i \left( \bm{X} \right)},
    \end{align*}
    and, in particular,
    \begin{align*}
        \ell_i\left( \bm{\Sigma}^{(i)} \bm{X} \right) 
        = \frac{\alpha^2 \ell_i \left( \bm{X} \right)}{1 + \left(\alpha^2 - 1 \right) \ell_i \left( \bm{X} \right)}.
    \end{align*}
\end{lemma}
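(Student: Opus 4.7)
The plan is a direct application of the Sherman--Morrison formula. Writing $\tilde{\bm{X}} := \bm{\Sigma}^{(i)} \bm{X}$, I would first observe that $\tilde{\bm{X}}$ coincides with $\bm{X}$ in every row except the $i$th, which has been scaled by $\alpha$, so the Gram matrix becomes
\[ \tilde{\bm{X}}^\intercal \tilde{\bm{X}} = \bm{X}^\intercal (\bm{\Sigma}^{(i)})^2 \bm{X} = \bm{X}^\intercal \bm{X} + (\alpha^2 - 1)\, \bm{x}_i \bm{x}_i^\intercal, \]
a rank-one update of $\bm{X}^\intercal \bm{X}$. Applying Sherman--Morrison --- valid whenever $1 + (\alpha^2 - 1)\ell_i(\bm{X}) \neq 0$, a condition guaranteed when $\tilde{\bm{X}}$ still has rank $d$ --- yields
\[ (\tilde{\bm{X}}^\intercal \tilde{\bm{X}})^{-1} = (\bm{X}^\intercal \bm{X})^{-1} - \frac{(\alpha^2 - 1)(\bm{X}^\intercal \bm{X})^{-1} \bm{x}_i \bm{x}_i^\intercal (\bm{X}^\intercal \bm{X})^{-1}}{1 + (\alpha^2 - 1)\ell_i(\bm{X})}. \]

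For $j \neq i$, the $j$th row of $\tilde{\bm{X}}$ remains $\bm{x}_j$, so plugging into $\ell_j(\tilde{\bm{X}}) = \bm{x}_j^\intercal (\tilde{\bm{X}}^\intercal \tilde{\bm{X}})^{-1} \bm{x}_j$ and using the symmetry of $(\bm{X}^\intercal \bm{X})^{-1}$ collapses the bilinear term into the square of the scalar $\bm{x}_j^\intercal (\bm{X}^\intercal \bm{X})^{-1} \bm{x}_i$, giving the first identity. For $j = i$, the $i$th row of $\tilde{\bm{X}}$ is $\alpha \bm{x}_i$, so one picks up an additional factor $\alpha^2$; specialising the general formula at $j = i$ and combining over the common denominator makes the numerator collapse,
\[ \ell_i(\bm{X})\bigl(1 + (\alpha^2 - 1)\ell_i(\bm{X})\bigr) - (\alpha^2 - 1)\,\ell_i(\bm{X})^2 = \ell_i(\bm{X}), \]
producing the stated $\alpha^2 \ell_i(\bm{X}) / \bigl(1 + (\alpha^2 - 1)\ell_i(\bm{X})\bigr)$.

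There is no real obstacle here: the proof is a single Sherman--Morrison step followed by routine algebra. The one thing worth flagging is that the factor written as $(\bm{X}^\intercal \bm{X})^\intercal$ in the statement appears to be a typographical slip for $(\bm{X}^\intercal \bm{X})^{-1}$, since the natural derivation produces the inverse of the Gram matrix, and the transpose (which equals $\bm{X}^\intercal \bm{X}$ by symmetry) would not yield a scalar of the correct scaling for a leverage-score expression.
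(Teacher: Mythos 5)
Your proof is correct. Note that the paper does not actually prove this lemma --- it is imported verbatim from Ordozgoiti et al.\ (Lemma 3.2 of \cite{ordozgoiti2022generalized}) and used as a black box in the proof of Proposition \ref{prop:levY1} --- so there is no in-paper argument to compare against; your Sherman--Morrison derivation is the standard one and supplies a complete, self-contained justification. You are also right that the factor $\left( \bm{X}^\intercal \bm{X} \right)^\intercal$ in the displayed formula is a typographical slip for $\left( \bm{X}^\intercal \bm{X} \right)^{-1}$, and right to restrict the first identity to $j \neq i$ (the extra $\alpha^2$ from the scaled $i$th row is exactly what turns the naive specialisation $\ell_i(\bm{X})/\bigl(1+(\alpha^2-1)\ell_i(\bm{X})\bigr)$ into the stated $\alpha^2\ell_i(\bm{X})/\bigl(1+(\alpha^2-1)\ell_i(\bm{X})\bigr)$). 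The only caveat worth recording is the degenerate case $\alpha = 0$ with $\ell_i(\bm{X}) = 1$, where $1+(\alpha^2-1)\ell_i(\bm{X}) = 0$, the rank of $\bm{\Sigma}^{(i)}\bm{X}$ drops, and the leverage scores of the scaled matrix are not defined via the Gram inverse; this case does not arise in the paper's application, where the scaling factors are the strictly positive $\alpha\sqrt{u_i^*}$.
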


We now ready to prove the inequality.
\begin{proposition} \label{prop:levY1}
    Use Algorithm \ref{alg:DLSS_thresh} to construct $\bm{X}_s = \bm{R} \bm{X}$, with $\varepsilon \in (0,1)$. Suppose that an optimal solution $\bm{u}^*$ of \eqref{Dual} satisfies $u_i^* > 0$ for all $i = 1,\dots,s$. Define $\bm{Y} := \sqrt{\bm{U}^*} \bm{X}$. Then 
    \begin{align*}
        \sum_{i=s+1}^n \ell_i \left( \bm{Y} \right) 
        \leq \sum_{i=s+1}^n \ell_i \left( \bm{X} \right).
    \end{align*}
\end{proposition}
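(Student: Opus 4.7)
The plan is to realize the transformation $\bm{X} \mapsto \bm{Y}$, up to an inconsequential overall positive scalar, as a finite sequence of single-row rescalings, and then invoke Lemma~\ref{lem:scaledrowlev} one step at a time to show that $\sum_{i>s}\ell_i$ is monotonically non-increasing along the sequence.

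First, I would choose a positive constant $c$ so that the diagonal matrix $\bm{D} := c\sqrt{\bm{U}^*}$ has $D_{ii} \geq 1$ for every $i \leq s$ and $D_{jj} \leq 1$ for every $j > s$. The hypothesis $u_i^* > 0$ for $i \leq s$ makes $1/\sqrt{u_i^*}$ finite and is exactly what permits such a choice; this is the point of the case-split remark at the beginning of Appendix~\ref{app:finaloptgap}. Since leverage scores are invariant under multiplication of the data matrix by a positive scalar, $\ell_i(\bm{D}\bm{X}) = \ell_i(\bm{Y})$ for every $i$, and it suffices to bound $\sum_{i>s}\ell_i(\bm{D}\bm{X})$ in terms of $\sum_{i>s}\ell_i(\bm{X})$.

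Next, I would interpolate from $\bm{X}$ to $\bm{D}\bm{X}$ through intermediate matrices $\bm{X}^{(0)} = \bm{X},\bm{X}^{(1)},\dots,\bm{X}^{(n)} = \bm{D}\bm{X}$, where $\bm{X}^{(k)}$ is produced from $\bm{X}^{(k-1)}$ by scaling its $k$th row by $D_{kk}$. Each $D_{kk}$ is positive, so every intermediate matrix still has rank $d$ and $\sum_i \ell_i(\bm{X}^{(k)}) = d$ throughout. Lemma~\ref{lem:scaledrowlev} supplies a closed form for the change in every leverage score at each step. When $k \leq s$ and the scaling factor $D_{kk} \geq 1$, the lemma forces $\ell_j$ to weakly decrease for every $j \neq k$, so in particular $\sum_{j>s}\ell_j$ (whose indices all satisfy $j \neq k$) weakly decreases. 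When $k > s$ and $D_{kk} \leq 1$, the leverage $\ell_k$ itself drops while each $\ell_j$ with $j \neq k$ weakly rises; here the conservation identity $\sum_i \ell_i = d$ becomes the workhorse, because it turns the weak increase of $\sum_{j\leq s}\ell_j$ (every term of which has $j \neq k$, hence rises) into a matching weak decrease of $\sum_{j>s}\ell_j$. Iterating over $k = 1,\dots,n$ gives $\sum_{i>s}\ell_i(\bm{D}\bm{X}) \leq \sum_{i>s}\ell_i(\bm{X})$, which is the claim.

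The main obstacle is the first step: simultaneously satisfying $c\sqrt{u_i^*} \geq 1$ for $i \leq s$ and $c\sqrt{u_j^*} \leq 1$ for $j > s$ with a single constant $c$. The hypothesis removes the immediate obstruction of a zero in the top $s$ entries, and the case-split framework of Theorem~\ref{thrm:finaloptgap} (with the small $\delta>0$ appearing in Case~2) is designed precisely to absorb any remaining compatibility issue when the hypothesis fails. Once $\bm{D}$ is in hand, the induction on single-row scalings together with Lemma~\ref{lem:scaledrowlev} and the identity $\sum_i\ell_i = d$ delivers the bound mechanically.
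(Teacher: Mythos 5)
Your overall strategy is the same as the paper's: rescale $\sqrt{\bm{U}^*}$ by a global constant so the first $s$ diagonal entries are $\geq 1$ and the rest are $\leq 1$, use scale-invariance of leverage scores, realise the rescaling as a sequence of single-row scalings, and apply Lemma~\ref{lem:scaledrowlev} together with $\sum_i \ell_i = d$ to show $\sum_{i>s}\ell_i$ is non-increasing at every step. The iteration logic in your second half is sound and matches the paper's.

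The genuine gap is the step you yourself flag as ``the main obstacle'' and then do not close: the existence of a single constant $c$ with $c\sqrt{u_i^*}\geq 1$ for all $i\leq s$ and $c\sqrt{u_j^*}\leq 1$ for all $j>s$. The hypothesis $u_i^*>0$ for $i\leq s$ only rules out division by zero; it does \emph{not} give you such a $c$. What you actually need is $\min_{i\leq s} u_i^* \geq \max_{j>s} u_j^*$, and this can fail, because the rows are ordered by the leverage scores of $\bm{X}$, not by the optimal weights --- e.g.\ if $u_1^*=10^{-3}$ while $u_{s+1}^*=10^{-1}$, the two constraints on $c$ are incompatible. Your appeal to the Case~2 machinery of Theorem~\ref{thrm:finaloptgap} does not repair this: that case split handles zeros among the first $s$ weights via a $\delta$-feasible surrogate, not the ordering between the two groups. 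The paper closes the gap with a point-splitting reduction: any row $k>s$ with large weight is replaced by $\beta$ copies carrying weight $u_k^*/\beta$ each (so that $u_k^*\bm{x}_k\bm{x}_k^\intercal=\sum_{i=1}^{\beta}\frac{u_k^*}{\beta}\bm{x}_k\bm{x}_k^\intercal$), which leaves both $\bm{X}^\intercal\bm{X}$, $\bm{X}^\intercal\bm{U}^*\bm{X}$ and the two tail sums of leverage scores unchanged while making the offending weights arbitrarily small; this justifies assuming $\min_{i\leq s}\sqrt{u_i^*}\geq\max_{j>s}\sqrt{u_j^*}$ without loss of generality, after which $c=1/\min_{i\leq s}\sqrt{u_i^*}$ works. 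You need this (or an equivalent) reduction before ``$\bm{D}$ is in hand.'' A secondary, minor point: $D_{kk}$ need not be positive for $k>s$ (some $u_k^*$ may vanish), so rank preservation of the intermediate matrices should be argued from $(\bm{D}\bm{X})^\intercal\bm{D}\bm{X}\succeq c^2\bm{X}^\intercal\bm{U}^*\bm{X}\succ\bm{0}$ rather than from positivity of each scaling factor.
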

\begin{proof}
    The structure of this proof is based on the proof of Theorem 3.2 in \cite{ordozgoiti2022generalized}. We will show that the sum of the last $n-s$ leverage scores of $\bm{X}$ do not increase when we premultiply it by $\sqrt{\bm{U}^*}$. Without loss of generality, we assume that $\min_{i=1,...,s} \sqrt{u_i^*} \geq \max_{i=s+1,...,n} \sqrt{u_i^*}$. This is because, for any $k$, we can write
    \begin{align*}
        u_k^* \bm{x}_k \bm{x}_k^\intercal 
        = \sum_{i=1}^\beta \frac{u_k^*}{\beta} \bm{x}_k \bm{x}_k^\intercal,
    \end{align*}
    where $\beta \in \text{\rm{I\!N}}$, such that $\frac{u_k^*}{\beta}$ is sufficiently small.

    First, we scale $\sqrt{\bm{U}^*}$ so that the first $s$ diagonal terms are $\geq 1$, and the last $n-s$ diagonal terms are $\leq 1$. Let $\alpha = 1/\min_{i=1,...,s} \sqrt{u_i^*}$. We define
    \begin{align*}
        \bm{V} := \alpha \sqrt{\bm{U}^*}.
    \end{align*}
    We exploit the fact that
    \begin{align*}
        \ell_i \left( \bm{V} \bm{X} \right)
        &= \ell_i \left( \alpha \sqrt{\bm{U}^*} \bm{X} \right) \\
        &= \ell_i \left( \alpha \bm{Y} \right) \\
        &= \left( \left( \alpha \bm{Y} \right) \left( \left( \alpha \bm{Y} \right)^\intercal \left( \alpha \bm{Y} \right) \right)^{-1} \left( \alpha \bm{Y} \right)^\intercal \right)_{i\,i} \\
        &= \left( \alpha^2 \frac{1}{\alpha^2} \bm{Y} \left( \bm{Y}^\intercal \bm{Y} \right)^{-1} \bm{Y}^\intercal \right)_{i\,i} \\
        &= \ell_i \left( \bm{Y} \right).
    \end{align*}
    To see how $\bm{V}$ affects the leverage scores of $\bm{X}$, we will consider scaling each row separately. We define $\bm{V}^{(i)}$ as the diagonal matrix satisfying $\bm{V}_{i\,i}^{(i)} = \bm{V}_{i\,i} = \alpha \sqrt{u_i^*}$ and $\bm{V}_{j\,j}^{(i)} = 1$, if $j \neq i$. Now, we consider the leverage scores of $\bm{V}^{(i)} \bm{X}$. From Lemma \ref{lem:scaledrowlev}, we have
    \begin{enumerate}
        \item If $i \leq s$, then $\alpha \sqrt{u_i^*} \geq 1$. Hence $\ell_i \left( \bm{V}^{(i)} \bm{X} \right) \geq \ell_i \left( \bm{X} \right)$, and $\ell_j \left( \bm{V}^{(i)} \bm{X} \right) \leq \ell_j \left( \bm{X} \right),$ if $j \neq i$.
        \item If $i \geq s$, then $\alpha \sqrt{u_i^*} \leq 1$. Hence $\ell_i \left( \bm{V}^{(i)} \bm{X} \right) \leq \ell_i \left( \bm{X} \right)$, and $\ell_j \left( \bm{V}^{(i)} \bm{X} \right) \geq \ell_j \left( \bm{X} \right),$ if $j \neq i$.
    \end{enumerate}
    Now, we consider scaling the first $s$ rows. From the previous discussion, we can conclude that the leverage scores of the last $n-s$ rows do not increase. Therefore, the sum of the first $s$ leverage scores does not decrease. (This is because the sum of the leverage scores remains constant, since scaling rows by non-zero constants does not affect the rank.) Next, consider scaling the last $n-s$ rows. From the discussion above, all the leverage scores of the first $s$ rows cannot decrease. That is, the sum of the first $s$ leverage scores again does not decrease. We conclude that the sum of the last $n-s$ leverage scores cannot increase. (This is because the sum of the leverage scores remains constant. Suppose $\text{rank}\left( \bm{V}\bm{X} \right) < d$. Then $\text{rank}\left( \bm{X}^\intercal \bm{U}^* \bm{X} \right) = \text{rank}\left( \left( \bm{V}\bm{X} \right)^\intercal \left( \bm{V}\bm{X} \right) \right) = \text{rank}\left( \bm{V}\bm{X} \right) < d$, which is a contradiction, since $\bm{X}^\intercal \bm{U}^* \bm{X}$ is invertible.) Hence
    \begin{align*}
        \sum_{i=1}^n \ell_i \left( \bm{Y} \right) 
        = \sum_{i=1}^n \ell_i \left( \bm{V} \bm{X} \right)
        \leq \sum_{i=1}^n \ell_i \left( \bm{X} \right).
    \end{align*}
\end{proof}
\subsection{Case 2: $u_i^* \geq 0,$ for all $i = 1,\dots,s$}
This time, we will examine a $\delta$-feasible solution $\bm{u}$ for \eqref{Dual}, which has $u_i > 0$ for all $i=1,\dots,n$. Such a solution can be achieved by using the Frank-Wolfe algorithm \cite{frank1956algorithm,wolfe1970convergence} with Khachiyan's initialisation \cite{khachiyan1996rounding}. 

Consider the feasible solution $\Tilde{\bm{u}}_s$ for \eqref{Duals}, given by
\begin{align*}
    \Tilde{\bm{u}}_s = \frac{1}{\bm{e}^\intercal \bm{u}_s} \bm{u}_s,
\end{align*}
where $\bm{u}_s$ contains the first $s$ entries of $\bm{u}$. We would like a bound similar to the one in Theorem \ref{thrm:thresh2}, with $\bm{X}$ replaced with
\begin{align*}
    \bm{Y} := \sqrt{\bm{U}} \bm{X}.
\end{align*}
We require the following result.

\begin{corollary}
    Use Algorithm \ref{alg:DLSS_thresh} to construct $\bm{X}_s = \bm{R} \bm{X}$, with $\varepsilon \in (0,1)$. Let $\bm{u}$ be a $\delta$-feasible solution to \eqref{Dual}. Then 
    \begin{align*}
        \sum_{i=s+1}^n \ell_i \left( \bm{Y} \right) 
        \leq \sum_{i=s+1}^n \ell_i \left( \bm{X} \right).
    \end{align*}
\end{corollary}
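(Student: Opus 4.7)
The plan is to reduce this corollary to Proposition~\ref{prop:levY1} (Case 1) by ensuring a $\delta$-feasible solution $\bm{u}$ with strictly positive entries everywhere is used. To this end, I would observe that running the Frank--Wolfe algorithm from Khachiyan's initialisation $\bm{u}_K = \frac{1}{n}\bm{e}$ with step sizes strictly less than $1$ preserves positivity of every entry: each iterate is a convex combination of the previous iterate (all entries positive by induction) and a vertex $\bm{e}_j$, which can never zero out a coordinate. Thus without loss of generality $u_i > 0$ for all $i = 1,\dots,n$.

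With strict positivity in hand, I would mirror the proof of Proposition~\ref{prop:levY1} line by line, substituting $\bm{u}$ for $\bm{u}^*$. Set $\alpha := 1/\min_{i=1,\dots,s}\sqrt{u_i}$ (finite because all first $s$ entries are strictly positive), and $\bm{V} := \alpha\sqrt{\bm{U}}$. Since leverage scores are invariant under multiplication by a nonzero scalar, $\ell_i(\bm{V}\bm{X}) = \ell_i(\bm{Y})$ for every $i$. Apply the splitting trick from the Case 1 proof---replacing any term $u_k\bm{x}_k\bm{x}_k^\intercal$ by $\beta$ copies of weight $u_k/\beta$ for $\beta$ sufficiently large---so that the first $s$ diagonal entries of $\bm{V}$ are at least $1$ and the last $n-s$ are at most $1$. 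This splitting leaves $\bm{X}^\intercal\bm{U}\bm{X}$ unchanged and splits each leverage score equally among its copies, so it does not alter the tail sum $\sum_{i=s+1}^n \ell_i(\bm{Y})$.

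The core of the argument is then a row-by-row application of Lemma~\ref{lem:scaledrowlev}: scaling any of the first $s$ rows by a factor $\ge 1$ weakly increases its own leverage score and weakly decreases every other, while scaling any of the last $n-s$ rows by a factor $\le 1$ does the opposite. Composing these scalings, the sum of the first $s$ leverage scores cannot decrease, so the sum of the last $n-s$ cannot increase. The invariant ``leverage scores sum to $d$'' is preserved at every intermediate step because each intermediate matrix is obtained by left-multiplying $\bm{X}$ by a diagonal matrix with strictly positive entries, which is invertible and hence preserves rank.

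The main obstacle I anticipate is the careful bookkeeping around the splitting trick and the ordering assumption $\min_{i\le s}\sqrt{u_i} \ge \max_{i>s}\sqrt{u_i}$: I need to confirm that the splitting can always be arranged so this ordering holds without shifting any leverage-score mass across the $s$/$n-s$ boundary. Beyond this routine check, the argument is essentially a verbatim transcription of Case 1, and the strict positivity of $\bm{u}$ guaranteed by the Frank--Wolfe plus Khachiyan construction removes the only genuine obstruction that motivated separating Case 2 from Case 1.
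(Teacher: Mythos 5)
Your proposal takes essentially the same route as the paper: the paper's own proof of this corollary is simply ``follow the proof of Proposition~\ref{prop:levY1}, but replace $\bm{u}^*$ with $\bm{u}$ and redefine $\bm{Y} := \sqrt{\bm{U}}\bm{X}$,'' relying on the strict positivity $u_i > 0$ of the Frank--Wolfe/Khachiyan iterate exactly as you do. Your explicit observation that Frank--Wolfe steps with step size strictly less than $1$ preserve positivity is a detail the paper leaves implicit, but otherwise the argument is the same.
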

\begin{proof}
    Follow the proof of Proposition \ref{prop:levY1}, but replace $\bm{u}^*$ with $\bm{u}$, and redefine $\bm{Y} := \sqrt{\bm{U}} \bm{X}$.
\end{proof}

Then, by the construction of Algorithm \ref{alg:DLSS_thresh}, we have
\begin{align*}
    \sum_{i=s+1}^n \ell_i \left( \bm{Y} \right) 
    \leq \sum_{i=s+1}^n \ell_i \left( \bm{X} \right)
    < \varepsilon,
\end{align*}
for some $\varepsilon \in (0,1)$. Therefore, we may apply Theorem \ref{thrm:thresh2} with $\bm{Y}$ instead of $\bm{X}$, to obtain the bound
\begin{align*}
    (1-\varepsilon) \bm{X}^\intercal \bm{U} \bm{X} \prec \bm{X}_s^\intercal \bm{U}_s \bm{X}_s.
\end{align*}
That is, the feasible solution $\Tilde{\bm{u}}_s$ satisfies
\begin{align*}
    (1-\varepsilon) \bm{X}^\intercal \bm{U} \bm{X} 
    \preceq \frac{1-\varepsilon}{\bm{e}^\intercal \bm{u}_s} \bm{X}^\intercal \bm{U}_s \bm{X} 
    \prec \frac{1}{\bm{e}^\intercal \bm{u}_s} \bm{X}_s^\intercal \bm{U}_s \bm{X}_s 
    = \bm{X}_s^\intercal \Tilde{\bm{U}}_s \bm{X}_s.
\end{align*}

We are now ready to prove the final optimality gap.
\begin{theorem}
    Use Algorithm \ref{alg:DLSS_thresh} to construct $\bm{X}_s = \bm{R} \bm{X}$, with $\varepsilon \in (0,1)$. Let $\bm{u}$ be a $\delta$-feasible solution to \eqref{Dual}. Then
    \begin{align*}
        g^* - g_s^*
        < d \log \left( \frac{1 + \delta}{1 - \varepsilon} \right).
    \end{align*}
\end{theorem}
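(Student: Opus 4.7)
The plan is to combine the subspace embedding inequality on $\bm{X}^\intercal \bm{U} \bm{X}$ (already derived above the theorem statement) with the approximate-optimality guarantee from Proposition \ref{prop:optimality gap}, bridging between the true optimum $\bm{u}^*$ and the $\delta$-feasible surrogate $\bm{u}$ that actually has strictly positive entries.

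First I would exploit concavity of $g_s$ and feasibility of $\tilde{\bm{u}}_s$ for \eqref{Duals} to write $g^*-g_s^* \leq g^* - g_s(\tilde{\bm{u}}_s)$, and then split this as
\begin{equation*}
    g^* - g_s(\tilde{\bm{u}}_s) = \bigl(g(\bm{u}^*) - g(\bm{u})\bigr) + \bigl(g(\bm{u}) - g_s(\tilde{\bm{u}}_s)\bigr).
\end{equation*}
The first parenthesised term is exactly the optimality gap of the $\delta$-feasible point $\bm{u}$ in \eqref{Dual}, which Proposition \ref{prop:optimality gap} bounds above by $d\log(1+\delta)$.

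For the second term, I would use the inequality
\begin{equation*}
    (1-\varepsilon)\bm{X}^\intercal \bm{U}\bm{X} \prec \bm{X}_s^\intercal \tilde{\bm{U}}_s \bm{X}_s
\end{equation*}
established just before the theorem statement. Applying Lemma \ref{lem:determinants} together with the monotonicity of $\log$, and using $\det(cM) = c^d \det(M)$ for a $d\times d$ matrix, this yields
\begin{equation*}
    d\log(1-\varepsilon) + \log\det\bigl(\bm{X}^\intercal \bm{U}\bm{X}\bigr) < \log\det\bigl(\bm{X}_s^\intercal \tilde{\bm{U}}_s \bm{X}_s\bigr),
\end{equation*}
so that $g(\bm{u}) - g_s(\tilde{\bm{u}}_s) < -d\log(1-\varepsilon) = d\log\bigl(1/(1-\varepsilon)\bigr)$.

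Adding the two bounds produces
\begin{equation*}
    g^* - g_s^* < d\log(1+\delta) + d\log\left(\frac{1}{1-\varepsilon}\right) = d\log\left(\frac{1+\delta}{1-\varepsilon}\right),
\end{equation*}
which is exactly the claim. There is no genuine obstacle in the present theorem: all the non-trivial work has already been done in the preceding corollary, where Proposition \ref{prop:levY1} (with $\bm{u}^*$ replaced by the strictly positive $\bm{u}$) was used to guarantee $\sum_{i=s+1}^n \ell_i(\bm{Y}) < \varepsilon$ and hence to invoke Theorem \ref{thrm:thresh2} on $\bm{Y} = \sqrt{\bm{U}}\bm{X}$. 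The only subtle point to be careful about is that the argument requires $u_i > 0$ for all $i$ (so that the scaling trick inside Proposition \ref{prop:levY1} goes through), which is why one works with a Frank--Wolfe/Khachiyan iterate $\bm{u}$ rather than $\bm{u}^*$ itself; the price of this detour is the extra factor $(1+\delta)$, with $\delta$ chosen arbitrarily small by running the auxiliary algorithm for sufficiently many iterations.
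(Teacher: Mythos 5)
Your proposal is correct and follows essentially the same route as the paper: both decompose the gap into $\bigl(g^*-g(\bm{u})\bigr)$, controlled by Proposition \ref{prop:optimality gap} as $d\log(1+\delta)$, plus a term controlled via $g_s^*\geq g_s(\tilde{\bm{u}}_s)$ and the Loewner bound $(1-\varepsilon)\bm{X}^\intercal\bm{U}\bm{X}\prec\bm{X}_s^\intercal\tilde{\bm{U}}_s\bm{X}_s$ together with Lemma \ref{lem:determinants} and $\det(c\bm{M})=c^d\det(\bm{M})$. The only cosmetic difference is that you split $g^*-g_s(\tilde{\bm{u}}_s)$ while the paper splits $g^*-g_s^*$ directly; the two are equivalent once $g_s^*\geq g_s(\tilde{\bm{u}}_s)$ is invoked.
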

\begin{proof}
    For this proof, we will exploit the fact that
    \begin{align*}
        g^* - g_s^* = \left( g^* - g(\bm{u}) \right) + \left( g(\bm{u}) - g_s^* \right).
    \end{align*}
    We will bound above $g(\bm{u}) - g_s^*$ first. Let $\Tilde{\bm{u}}_s = \frac{1}{\bm{e}^\intercal \bm{u}_s} \bm{u}_s$, where $\bm{u}_s$ contains the first $s$ entries of $\bm{u}$. Since $g_s$ is concave, the optimal value $g_s^*$ must be greater or equal to $g_s$ at any feasible point for \eqref{Duals}. Hence
\begin{align*}
    g_s^* 
    &\geq g_s(\Tilde{\bm{u}}_s) \\
    &= \log \det \left( \bm{X}_s^\intercal \Tilde{\bm{U}}_s \bm{X}_s \right) \\
    &> \log \det \left( (1 - \varepsilon) \bm{X}^\intercal \bm{U} \bm{X} \right) \\
    &= d \log \left( 1 - \varepsilon \right) + \log \det \left( \bm{X}^\intercal \bm{U} \bm{X} \right),
\end{align*}
where the inequality uses Lemma \ref{lem:determinants}.
Hence
\begin{align*}
    g(\bm{u}) - g_s^*
    &< \log \det \left( \bm{X}^\intercal \bm{U} \bm{X} \right) - \left( d \log \left( 1 - \varepsilon \right) + \log \det \left( \bm{X}^\intercal \bm{U} \bm{X} \right) \right) \\
    &= d \log \left( \frac{1}{1 - \varepsilon} \right).
\end{align*}

Now, recall that $\bm{u}$ is $\delta$-feasible for \eqref{Dual}. By Proposition \ref{prop:optimality gap}, we have
\begin{align*}
    g^* - g(\bm{u}) \leq d \log \left( 1 + \delta \right).
\end{align*}
Hence
\begin{align*}
    g^* - g_s^* 
    &= \left( g^* - g(\bm{u}) \right) + \left( g(\bm{u}) - g_s^* \right) \\
    &< d \log \left( \frac{1}{1 - \varepsilon}\right) + d \log \left( 1 + \delta \right) \\
    &= d \log \left( \frac{1 + \delta}{1 - \varepsilon}\right).
\end{align*}
\end{proof}
\section{More numerical results} \label{app:numresults}
\subsection{Comparison of our algorithm with the Fixed Point algorithm} \label{app:comparison}
All computations in this section are performed on a personal laptop with a 64 bit Windows 11 Home operating system (Version 23H2), and a 3.30 GHz  11th Gen Intel Core i7-11370H processor with 40 GB of RAM. The algorithms are run using MATLAB (R2021a).

In this subsection, we compare our algorithm to the Fixed Point algorithm \cite{cohen2019near}. For consistency, we run our algorithm using $s = 10\%$ of $n$. We run both algorithms on Gaussian datasets of size $n = 1\,000\,000$, with dimension $d$ ranging from 2 to 10. Table \ref{tab:morenumericalresults1} summarises the total computation time required for both algorithms, for varying values of accuracy parameter $\delta$. 

\begin{table}[H]
\caption{Time summary for the two algorithms. Tests run on Gaussian datasets of size $n = 1\,000\,000$, and $d$ ranging from 2 to 10. Each value represents mean and sample standard deviation of 100 runs.}
\centering
\vspace{11pt}
\begin{tabular}{lllll}
& & & \multicolumn{2}{c}{Time (s)} \\
\cmidrule(r){4-5}
$n$ & $d$ & $\delta$ & Our algorithm & Fixed Point algorithm \\ 
\midrule
$1\,000\,000$ & 2 & $10^{-2}$ & 0.0914 $\pm$ 0.0220 & 1.4331 $\pm$ 0.0877 \\
& & $10^{-3}$ & 0.0922 $\pm$ 0.0210 & 12.1627 $\pm$ 0.8764 \\
& & $10^{-4}$ & 0.0873 $\pm$ 0.0215 & 63.1388 $\pm$ 6.2418 \\
& 5 & $10^{-2}$ & 0.1330 $\pm$ 0.0219 & 3.6558 $\pm$ 0.2074 \\
& & $10^{-3}$ & 0.1956 $\pm$ 0.0442 & 12.1783 $\pm$ 1.3380 \\
& & $10^{-4}$ & 0.2125 $\pm$ 0.0390 & 47.2873 $\pm$ 11.6625 \\
& 10 & $10^{-2}$ & 0.2483 $\pm$ 0.0618 & 6.0298 $\pm$ 0.7460 \\
& & $10^{-3}$ & 0.4177 $\pm$ 0.1025 & 21.8280 $\pm$ 3.9056 \\
& & $10^{-4}$ & 0.4313 $\pm$ 0.1077 & 117.9867 $\pm$ 41.5669
\end{tabular}
\label{tab:morenumericalresults1}
\end{table}

We now apply the same methods to Gaussian matrices of dimension $d = 100$, with varying number of points $n$ and accuracy parameter $\delta$. Table \ref{tab:morenumericalresults2} summarises the total computation time required for both algorithms.

\begin{table}[H]
\caption{Time summary for the two algorithms. Tests run on Gaussian datasets of dimension $d = 100$, with varying $n$. Each value represents mean and sample standard deviation (when multiple runs are computed).}
\centering
\vspace{11pt}
\begin{tabular}{llllll}
& & & & \multicolumn{2}{c}{Time (s)} \\
\cmidrule(r){5-6}
$d$ & $n$ & $\delta$ & $\#$ runs & Our algorithm & Fixed Point algorithm\\ 
\midrule
100 & $10\,000$ & $10^{-2}$ & 100 & 0.0927 $\pm$ 0.0402 & 0.7270 $\pm$ 0.1886 \\
& & $10^{-3}$ & 100 & 0.2578 $\pm$ 0.1991 & 5.6216 $\pm$ 1.5382 \\
& & $10^{-4}$ & 100 & 0.3013 $\pm$ 0.0833 & 18.6236 $\pm$ 4.8307 \\
& & $10^{-5}$ & 50 & 0.5162 $\pm$ 0.1292 & 82.4097 $\pm$ 26.7802 \\
& $100\,000$ & $10^{-3}$ & 20 & 1.9359 $\pm$ 0.1371 & 160.1398 $\pm$ 34.8062 \\
& $1\,000\,000$ & $10^{-3}$ & 1 & 101.0625 & 4837.5313
\end{tabular}
\label{tab:morenumericalresults2}
\end{table}

In both Tables \ref{tab:morenumericalresults1} and \ref{tab:morenumericalresults2}, we notice that as $\delta$ increases, the time taken for both methods generally increases. However, the increase in time for our algorithm is minimal, while the increase in time for the Fixed Point algorithm is significant. In the final row of Table \ref{tab:morenumericalresults2}, our algorithm takes less than 2 minutes, compared with the Fixed Point algorithm, which took about 80 minutes. 

We emphasise that the examples considered here are smaller than those we considered in Section \ref{sec:empirical_results}, and the accuracy parameter $\delta$ is much larger than our desired $10^{-9}$. Regardless, we ran the Fixed Point algorithm on the examples from Section \ref{sec:empirical_results}. We found that for all three datasets, the algorithm did not converge in two hours of runtime.
\subsection{Real world data} \label{sec:realworlddata}
All computations in this section are performed on a personal laptop with a 64 bit MacOS 13 operating system, and a 2.4 GHz Quad-Core Intel Core i5 processor with 8 GB of RAM. The algorithms are run using MATLAB (R2021a).

We now test our algorithm on three smaller datasets from the UCI Machine Learning Repository. Table \ref{tab:datasetoutline1} summarises the size of each dataset, and time to compute the MVCE using the WA algorithm \cite{wolfe1970convergence,atwood1973sequences}.

\begin{table}[h]
\caption{Dataset description. This includes dimension and time to compute MVCE using the WA algorithm.}
\centering
\vspace{11pt}
\begin{tabular}{llll}
Dataset & $n$ & $d$ & Time (s) \\
\midrule
Ethylene CO \cite{gas_sensor2015} & $4\,208\,261$ & 19 & 85.92 $\pm$ 0.17 \\
Ethylene CH4 \cite{gas_sensor2015} & $4\,178\,504$ & 19 & 75.89 $\pm$ 0.38 \\
Skin \cite{skin2012} & $245\,057$ & 4 & 0.54 $\pm$ 0.04
\end{tabular}
\label{tab:datasetoutline1}
\end{table}

We then apply our algorithm on each dataset, varying sample size $s$ from 1$\%$ to 10$\%$ of $n$. In Table \ref{tab:optgap1}, we summarise the optimality gaps. The deterministic sampling performed well on all datasets, achieving very small optimality gaps when $s$ is 10$\%$ of $n$. In Table \ref{tab:time1}, we summarise the computation times. Although the deterministic sampling was the slowest, all sampling methods decreased the total computation time required to calculate the MVCE.

\begin{table}[t]
\caption{Optimality gap summary. Sample size $s$ takes values of 1$\%$, 5$\%$, and 10$\%$ of $n$. Each value represents mean and sample standard deviation of 5 runs.}
\centering
\vspace{11pt}
\begin{tabular}{lllll}
& & \multicolumn{3}{c}{Optimality Gap} \\
\cmidrule(r){3-5}
Dataset & $s$ & det & prob & unif \\
\midrule
Ethylene CO \cite{gas_sensor2015} & 1$\%$ & 4.66 & 1.01 $\pm$ 0.11 & 5.47 $\pm$ 0.65 \\
& 5$\%$ & 0.02 & 0.43 $\pm$ 0.03 & 4.40 $\pm$ 1.03 \\
& 10$\%$ & 5.67E-06 & 0.25 $\pm$ 0.04 & 3.24 $\pm$ 0.48 \\
Ethylene CH4 \cite{gas_sensor2015} & 1$\%$ & 1.59 & 1.48 $\pm$ 0.09 & 3.53 $\pm$ 0.42 \\
& 5$\%$ & 0.05 & 0.66 $\pm$ 0.05 & 1.69 $\pm$ 0.15 \\
& 10$\%$ & 0.01 & 0.37 $\pm$ 0.08 & 1.35 $\pm$ 0.11 \\
Skin \cite{skin2012} & 1$\%$ & 0.75 & 0.07 $\pm$ 0.02 & 0.25 $\pm$ 0.13 \\
& 5$\%$ & 0.56 & 0.02 $\pm$ 0.01 & 0.05 $\pm$ 0.03 \\
& 10$\%$ & -3.55E-15 & 0.03 $\pm$ 0.01 & 0.04 $\pm$ 0.02
\end{tabular}
\label{tab:optgap1}
\end{table}

\begin{table}[t]
\caption{Time summary. Sample size $s$ takes values of 1$\%$, 5$\%$, and 10$\%$ of $n$. Each value represents mean and sample standard deviation of 5 runs.}
\centering
\vspace{11pt}
\begin{tabular}{lllll}
& & \multicolumn{3}{c}{Time (s)} \\
\cmidrule(r){3-5}
Dataset & $s$ & det & prob & unif \\
\midrule
Ethylene CO \cite{gas_sensor2015} & 1$\%$ & 6.00 $\pm$ 0.12 & 5.42 $\pm$ 0.74 & 4.01 $\pm$ 0.39 \\
& 5$\%$ & 15.00 $\pm$ 0.56 & 9.94 $\pm$ 0.47 & 7.55 $\pm$ 0.05 \\
& 10$\%$ & 15.71 $\pm$ 0.14 & 14.32 $\pm$ 1.26 & 11.66 $\pm$ 0.95 \\
Ethylene CH4 \cite{gas_sensor2015} & 1$\%$ & 5.91 $\pm$ 0.30 & 5.24 $\pm$ 0.36 & 3.35 $\pm$ 0.30 \\
& 5$\%$ & 13.56 $\pm$ 0.58 & 10.00 $\pm$ 1.10 & 7.95 $\pm$ 1.08 \\
& 10$\%$ & 15.18 $\pm$ 0.34 & 13.05 $\pm$ 0.81 & 10.26 $\pm$ 0.93 \\
Skin \cite{skin2012} & 1$\%$ & 0.04 $\pm$ 0.01 & 0.04 $\pm$ 0.01 & 0.05 $\pm$ 0.03 \\
& 5$\%$ & 0.15 $\pm$ 0.03 & 0.13 $\pm$ 0.01 & 0.10 $\pm$ 0.02 \\
& 10$\%$ & 0.15 $\pm$ 0.03 & 0.17 $\pm$ 0.03 & 0.13 $\pm$ 0.02
\end{tabular}
\label{tab:time1}
\end{table} 

\end{document}